\documentclass[journal,twoside,web]{ieeecolor}

\usepackage{makecell}
\usepackage{generic}
\usepackage{cite}
\usepackage{url}

\usepackage{marvosym}
\usepackage{amsmath}
\usepackage{amssymb}
\usepackage{cases}
\usepackage{amsfonts}
\usepackage{graphicx}
\usepackage{textcomp}
\usepackage{caption}
\usepackage{subcaption}
\usepackage{tabularx}
\usepackage{color}
\let\labelindent\relax
\usepackage{enumitem}
\usepackage[normalem]{ulem}
\usepackage{algorithm,algorithmic}
\usepackage{epstopdf}

\newtheorem{assumption}{Assumption}
\newtheorem{definition}{Definition}
\newtheorem{proposition}{Proposition}
\newtheorem{remark}{Remark}
\newtheorem{theorem}{Theorem}

\DeclareMathOperator*{\argmin}{\arg\min}

\begin{document}

\title{Robustness certificates 
in data-driven non-convex optimization with additively-uncertain  
constraints\thanks{Paper partly supported by FAIR (Future Artificial Intelligence Research) project, funded by the NextGenerationEU program within the PNRR-PE-AI scheme (M4C2, Investment 1.3, Line on Artificial Intelligence),  by the  PRIN PNRR project  P2022NB77E ``A data-driven cooperative framework for the management of distributed energy and water resources'' (CUP: D53D23016100001), funded by the NextGeneration EU program (Mission 4, Component 2, Investment 1.1), by the PRIN 2022 project ``The Scenario Approach for Control and Non-Convex Design'' (project number D53D23001450006), and by the Italian Ministry of Enterprises and Made in Italy in the framework of the project 4DDS (4D Drone Swarms) under grant no. F/310097/01-04/X56.
}
}

\author{Alexander J. Gallo~\IEEEmembership{Member,~IEEE}, Massimiliano Zoggia, Alessandro Falsone~\IEEEmembership{Member,~IEEE},\\ Maria Prandini~\IEEEmembership{Fellow,~IEEE}, and Simone Garatti~\IEEEmembership{Member,~IEEE}
\thanks{The authors are with the Dipartimento di Elettronica Informazione e Bioingegneria, Politecnico di Milano, Piazza Leonardo da Vinci 32, 20133 Milano, Italy (e-mail:  {alexanderjulian.gallo, alessandro.falsone, maria.prandini, simone.garatti}@polimi.it; massimiliano.zoggia@mail.polimi.it).}
}

\maketitle

\begin{abstract}
We consider decision-making problems that are formulated as non-convex optimization programs where uncertainty enters the constraints through an additive term, independent of the decision variables, and robustness is imposed using a finite data-set, according to the scenario robust optimization paradigm.
By exploiting the structure of the constraints, we show that both \textit{a priori} and \textit{a posteriori} distribution-free probabilistic robustness certificates for a possibly sub-optimal solution to the resulting data-driven optimization problem can be obtained with minimal computational effort. Building on these results, we also discuss a one-shot and an incremental procedure to determine the size of the data-set so as to guarantee a user-chosen robustness level. Notably, both the \textit{a posteriori} robustness assessment and incremental data-set sizing do not require to solve the non-convex scenario program. 
A comparative analysis performed on the unit commitment problem using real data reveals a limited increase in conservativeness with a significant computational saving with respect to the application of scenario theory results for general, non necessarily structured, non-convex problems.  
\end{abstract}

\section{Introduction}

In this paper we consider design problems where decisions are taken in the presence of uncertainty that is known from data. We focus on the case when the decision-making problem is formulated as a data-driven optimization program of the following  form
\begin{align} \label{eq:decision-making}
   \min_{x\in\mathcal X} \quad &f(x)\\
   \text{s.t.} \quad &h(x,\delta_i)\leq 0, \quad i= 1, \dots, N, \nonumber
\end{align}
where $x$ is the decision variable and $(\delta_1,\delta_2,\dots,\delta_N)$ is the data-set representing available instances of the  uncertainty $\delta$ taking values in some set $\Delta$. Set $\mathcal X\subseteq \mathbb R^d$ may encode integer constraints, e.g., when it is a subset of 
$\mathbb R^{n_c} \times \{0,1\}^{n_d}$ with $n_c+n_d=d$.  The scalar-valued function $f(\cdot): \mathcal X \to \mathbb R$ is the cost to be minimized, while the $q$-dimensional function $h(\cdot,\cdot):\mathcal X \times \Delta \to \mathbb R^q$ depends on both $x$ and the uncertainty $\delta$ and defines the constraints of the problem. In \eqref{eq:decision-making}, constraints are imposed for all available data to try to enforce robustness against uncertainty. The solution of the constrained optimization program \eqref{eq:decision-making} depends on the data-set $(\delta_1,\dots,\delta_N)$ and is denoted with the shorthand $x^\star_N$ for ease of reference.

The scenario approach \cite{campi2018introduction,CGP:09,campi2021scenario} addresses the issue of evaluating the \textit{robustness} of the decision $x_N^\star$ by assessing the \textit{risk}  that  it violates unseen instances (``scenarios'') of the uncertainty $\delta$, which is modeled as a random outcome from some underlying (typically unknown) probability space $(\Delta,\mathcal D,\mathbb P)$. 

Theoretical  
\cite{campi2008exact,esfahani2014performance,MarPraLyl:14,alamo2015randomized,campi2018wait,9750913,FalMarZizPraGar2022,garatti2024non-convex} as well as  application-oriented \cite{bolognani2016fast,fele2019scenario,FALSONE2019108537,RocCreKen2020,yang2024scenario} results have been developed within the scenario framework. In the earliest theoretical findings in the context of convex robust optimization \cite{campi2008exact}, risk is assessed \textit{a priori}, in terms of an upper bound guaranteed with a certain confidence and associated with the number $N$ of scenarios. More recently, it was shown that tighter risk assessments can be obtained by means of \textit{a posteriori} computed upper bounds  \cite{campi2018wait,garatti2022risk}, depending on the optimal solution $x_N^\star$ and the collected data. This theory applies also to  
non-convex optimization problems and various other decision schemes \cite{garatti2024non-convex,campi2023compression}, for which equivalent \textit{a priori} bounds either do not exist (for general problems) or are very conservative (for specific problems, e.g., \cite{falsone2020l4dc,manieri2023arc}).

In this paper, we consider scenario 
optimization programs of the form 
\begin{align}
    \label{eq:SP}
        \min_{x\in\mathcal X} \quad &f(x)    \\
        \text{s.t.} \quad & g(x) \leq b(\delta_i), \quad i = 1,\dots,N,    \nonumber
    \end{align}
where the constraint function has the following  structure:  $h(x,\delta)= g(x) - b(\delta)$, with $g: \mathcal X \rightarrow \mathbb R^q$ and $b: \Delta \rightarrow \mathbb R^q$ depending only on the decision variable $x$ and the uncertainty $\delta$, respectively. This kind of problems appears in a variety of decision making settings, including model predictive control for linear systems with additive disturbances \cite{schildbach2013scenario,mesbah2016stochastic}, possibly in the presence of quantized inputs, as well as problems in energy management, such as unit commitment \cite{hong2021uncertainty} and economic dispatch \cite{modarresi2019scenario}.
 
In this paper, we explore how the separable structure of the constraints can be exploited to provide the decision maker with a set of \textit{tools} for risk assessment that is wider than those that are available for general, not necessarily structured non-convex problems. In particular,  easy-to-compute robustness certificates
can be provided for any possibly sub-optimal solution of the scenario 
program, and not only for the optimal one. As such, they are relevant in those (non-convex) settings where the solution is hard to compute, e.g.,  because of the presence of local minima or the combinatorial nature of the problem. Specifically, we show that 
\begin{enumerate}[label=\roman*.]
    \item the risk of any feasible solution to \eqref{eq:SP} is \textit{upper bounded} by the risk of an associated, convex, scenario problem;
    \item \textit{a posteriori} robustness certificates on any possibly sub-optimal  scenario 
    solution can be computed with minimal computational effort, without solving the scenario optimization problem; 
    \item \textit{a priori} risk assessment can be performed and used for data-set sizing;
    \item the number of scenarios needed to compute a possibly sub-optimal scenario solution with a certain \textit{a priori} guaranteed robustness level can be reduced using an iterative procedure where the data-set size is grown progressively and the optimization problem is solved only at the final iteration.    
\end{enumerate}
Our results are demonstrated on the unit commitment problem, which consists in determining which generators in a pool should be turned on/off and how much power they should produce over a given time horizon to meet the electricity demand at the lowest cost. Using real-world data, our results show a limited increase in conservativeness, while providing substantial computational advantages in evaluating robustness guarantees, compared to standard scenario theory.

Some results on improved robustness guarantees on the scenario 
solution that exploit the structure of the constraints in the scenario program are also available in \cite{schildbach2014scenario,zhang2015sample}. However, they are confined to the convex case and they provide only \textit{a priori} robustness guarantees for one-shot sizing of the data-set based on  \cite{campi2008exact}.  

A preliminary version of this work was presented in \cite{gallo2025robust}. This paper differs from \cite{gallo2025robust} in that here we consider the risk of any feasible solution to the non-convex scenario program and make explicit that it is upper bounded by the risk of a convex program, so that tighter results from \cite{campi2008exact} can be leveraged. Furthermore, while in \cite{gallo2025robust} we presented a simple artificial numerical example, here we show the effectiveness of our method when applied to the more complex unit commitment problem using \textit{real data} of the hourly aggregate load demand  
from the Spanish transmission system operator (TSO) Red El\'ectrica.

The remainder of this paper has the following structure.
In Section~\ref{sec:recap}, we provide a brief summary of the fundamental concepts of the scenario approach that are relevant to the present work. Reference is made to the data-driven optimization problem \eqref{eq:decision-making} to then show how these general results can be exploited in the structured case \eqref{eq:SP}.  
Specifically, in Section~\ref{sec:main} we show  that the risk of constraint violation associated to any feasible solution to \eqref{eq:SP} is upper bounded by that of the optimal solution of a simpler, convex scenario problem. 
This result is then exploited to provide easily-computable distribution-free probabilistic robustness certificates. Data-set sizing to achieve solutions with a user-defined robustness level  is discussed in Section~\ref{sec:SampleSize}.
Finally, the effectiveness of the proposed approach is demonstrated on the unit commitment problem using real data in Section~\ref{sec:UCP}.
Some concluding remarks are given in Section~\ref{sec:Concl}.

\section{Relevant results from scenario theory}\label{sec:recap}

Consider a scenario program of the form \eqref{eq:decision-making} where $(\delta_1,\dots,\delta_N)$ is a data-set of $N$ uncertainty realizations called ``scenarios'' and modeled as independent draws  from $(\Delta, \mathcal D, \mathbb P)$. The following assumptions are in order. 
\begin{assumption}\label{asm:E!}
    The scenario problem \eqref{eq:decision-making} is feasible with probability one, for every $N$.
    $\hfill\triangleleft$
\end{assumption}
\begin{assumption}\label{asm:uniqueness}
    The scenario problem \eqref{eq:decision-making} admits a unique optimal solution $x_N^\star$ with probability one, for every $N$.    
    $\hfill\triangleleft$
\end{assumption}

With reference to \eqref{eq:decision-making}, a decision $x \in \mathcal X$ is said to be \textit{inappropriate} for the uncertainty realization $\delta \in \Delta$ if the constraint in \eqref{eq:decision-making} is not satisfied for that $\delta$. 
We can then define the \textit{risk} associated to  a decision $x \in \mathcal X$ as  the violation probability
    \begin{equation}\label{eq:rsk}
        V(x) = \mathbb P \{ \delta \in \Delta : \,   h(x,\delta) \not\le 0 \},
    \end{equation}
which is  the probability measure of the set of uncertainty realizations for which $x$ is inappropriate. 

The risk $V(x_N^\star)$ associated with the optimal solution $x_N^\star$ to \eqref{eq:decision-making} is a random quantity since it depends on $x_N^\star$, which is a random variable defined on the product probability space $(\Delta^N, \mathcal D^{\otimes N}, \mathbb P^N)$. The scenario theory allows to determine a bound on $V(x_N^\star)$ that holds with a certain confidence. Such a bound is distribution-free, in that it is valid irrespectively of the underlying probability $\mathbb P$, and it depends on an \textit{observable} quantity called \textit{complexity}, \cite{garatti2024non-convex}. Instrumental to the definition of complexity is that of \textit{support list}.

\begin{definition}[Support list]\label{def:suppSet}
    Given a list of scenarios $(\delta_1,\dots,\delta_N)$, a support list of the scenario program \eqref{eq:decision-making} is any sub-list of length $k \in \{0,1,\dots,N\}$, $\mathcal S_N^k = (\delta_{i_1},\dots,\delta_{i_k})$, with $i_1 < i_2 < \cdots < i_k$ such that:
    \begin{enumerate}[label=\roman*.]
        \item $x_{\mathcal S_N^k}^\star = x_N^\star$, where $x_{\mathcal S_N^k}^\star$ is the minimizer of \eqref{eq:decision-making} with scenarios in $\mathcal S_N^k$ only.
        \item $\mathcal S_N^k$ is irreducible, i.e., no element can be removed from $\mathcal{S}_N^k$ without changing the solution to \eqref{eq:decision-making}.
        $\hfill\triangleleft$
    \end{enumerate}
\end{definition}
\begin{definition}[Complexity]\label{def:cplx}
    For any given data-set $(\delta_1,\ldots,\delta_N)$, the  complexity $s_N^\star$ of the associated scenario program \eqref{eq:decision-making} is the size of the smallest support list. \footnote{The complexity $s_N^\star$ is a function of the scenarios $(\delta_1,\ldots,\delta_N)$ but this dependence is not made explicit for ease of notation.}  
     $\hfill\triangleleft$
\end{definition}

A fundamental result in \cite{garatti2024non-convex} is given by the following theorem, rephrased according to our notation. 

\begin{theorem}[Theorem 6 in \cite{garatti2024non-convex}]\label{th:scenario}
Under Assumptions~\ref{asm:E!} and~\ref{asm:uniqueness}, for any confidence level $\beta \in (0,1)$, the risk $V(x_N^\star)$ of the optimal solution to \eqref{eq:decision-making} satisfies 
\begin{equation} \label{eq:eps-1-beta}
    \mathbb P^N \{V(x_N^\star) \leq \epsilon_{N,\beta}(s_N^\star) \} \geq 1-\beta,
\end{equation}
where the violation function $\epsilon_{N,\beta}: \{0,1,\dots,N\} \rightarrow [0,1]$ is defined as    
\begin{align}\label{eq:eps-def}
    \epsilon_{N,\beta}(k) = \begin{cases}
1-t(k), &k = 0,1,\dots,N-1\\
1, &k=N
\end{cases}
\end{align}
with 
$t(k)$ being the unique solution in $[0,1]$  to
\begin{equation}\label{eq:epsNBeta}
    \frac{\beta}{N} \sum_{m = k}^{N-1} \binom{m}{k} t^{m-k} - \binom{N}{k}t^{N-k} = 0. 
\end{equation}
 $\hfill\Box$
\end{theorem}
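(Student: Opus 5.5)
We would follow the compression-based route of \cite{garatti2024non-convex}. The starting point is to view the map from the data-set to the smallest support list as a compression scheme. Under Assumption~\ref{asm:uniqueness}, fix a deterministic tie-breaking rule among smallest support lists, so that for every $N$ the sub-list $\mathcal S_N^{s_N^\star}$ is a measurable function of $(\delta_1,\dots,\delta_N)$. Two properties then hold.

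\begin{enumerate}[label=(\alph*)]
    \item \emph{Preference/consistency.} Since $x^\star_{\mathcal S}=x_N^\star$ and $x_N^\star$ satisfies all $N$ constraints, every discarded scenario $\delta_j\notin\mathcal S$ satisfies $h(x^\star_{\mathcal S},\delta_j)\le 0$.
    \item \emph{Stability of the compression.} If a new scenario $\delta$ is appropriate for $x_N^\star$, then adding it does not change the solution. Uniqueness together with feasibility (Assumption~\ref{asm:E!}) ensures that the optimum of the enlarged problem is still $x_N^\star$. Moreover, the smallest support list of the enlarged problem is no larger than $s_N^\star$.
\end{enumerate}

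The key step is to bound, for each $k$, the probability that the complexity equals $k$ while the risk is large. We decompose
\[
\mathbb P^N\{V(x_N^\star)>\epsilon_{N,\beta}(s_N^\star)\}=\sum_{k=0}^{N-1}\mathbb P^N\{s_N^\star=k,\ V(x_N^\star)>\epsilon_{N,\beta}(k)\};
\]
the case $k=N$ is trivial since $\epsilon=1$. Rather than using a union bound over $\binom{N}{k}$ index sets (which would give the looser $\binom{N}{k}(1-\epsilon)^{N-k}$ bound), we would use the sharper argument of \cite{garatti2024non-convex}. For each $m$, the events ``the solution based on $m$ scenarios has complexity $k$'' are examined jointly over $m=k,\dots,N$. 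This yields a family of integral inequalities constraining the joint distribution of $(s_m^\star,V(x_m^\star))$.

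Concretely, let $F_k$ denote the distribution of the risk of a $k$-point compression. Then the probability that exactly $k$ points form the support list and the remaining $N-k$ are appropriate is at most $\binom{N}{k}\int (1-v)^{N-k}\,dF_k(v)$. Together with the analogous relations for $m<N$, this gives a moment problem.

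The worst case of this moment problem is then computed. The quantity $\sup\mathbb P^N\{V>\epsilon(s^\star)\}$ over all admissible measures is a linear program in the measures. Its dual is a polynomial inequality: one needs, for all $t\in[0,1]$, that
\[
\frac{\beta}{N}\sum_{m=k}^{N-1}\binom{m}{k}t^{m-k}-\binom{N}{k}t^{N-k}\ge 0 \quad\text{for } t\ge t(k),
\]
with suitable indicator terms below $t(k)$. The choice of $t(k)$ as the root of \eqref{eq:epsNBeta} makes the dual certificate tight and yields total mass at most $\beta$. Uniqueness of the root in $[0,1]$ follows by dividing by $t^{N-k}$ and observing monotonicity in $t$: the left side decreases from $+\infty$ to $\frac{\beta}{N}\sum_m\binom mk-\binom Nk=\binom Nk\big(\frac{\beta}{N}\cdot\frac{N}{k+1}\cdots\big)<0$, using $\sum_{m=k}^{N-1}\binom mk=\binom N{k+1}$.

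We expect the main obstacle to be the construction of the dual certificate. This means showing rigorously that the family of constraints generated by the nested problems with $m\le N$ scenarios implies the bound $\beta$. For this step we would first follow the proof of Theorem~6 in \cite{garatti2024non-convex} essentially verbatim, since the present statement is exactly that result. In the paper itself it suffices to invoke it.
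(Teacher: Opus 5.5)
The paper gives no proof of this statement. It imports it verbatim from \cite{garatti2024non-convex}, so your closing fallback of invoking Theorem~6 there is exactly what the paper does. As a self-contained argument, however, your sketch has a genuine gap, and it is not where you place it. The dual certificate is the easy part. The polynomial in \eqref{eq:epsNBeta} is the certificate with uniform weights $\beta/N$ on the $N$ constraints indexed by $m=0,\dots,N-1$ (not $m\le N$). With $t$ standing for $1-v$, it must satisfy $\frac{\beta}{N}\sum_{m=k}^{N-1}\binom{m}{k}t^{m-k}\ge\binom{N}{k}t^{N-k}$ on the bad region $t<t(k)$, i.e., $V>\epsilon_{N,\beta}(k)$. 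This is exactly what your monotonicity computation gives. You state it for $t\ge t(k)$, where the polynomial is in fact negative. Incidentally, its value at $t=1$ is $\binom{N}{k}\bigl(\frac{\beta(N-k)}{N(k+1)}-1\bigr)$.

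The missing piece is the primal side. The relations you write are upper bounds on probabilities. Summed over $k$, they only give lower bounds on the moments of the $F_k$'s, which cannot cap the objective. What the duality argument needs is, for $m=0,\dots,N-1$,
\[
\sum_{k=0}^{m}\binom{m}{k}\int_{[0,1]}(1-v)^{m-k}\,dF_k(v)\le 1 .
\]
Take $F_k$ to be the law of $V(x_k^\star)$ restricted to the event that $(\delta_1,\dots,\delta_k)$ is its own minimal support list. Then the left-hand side is the expected number of sub-lists of $(\delta_1,\dots,\delta_m)$ that are their own minimal support list and whose solution is appropriate for all remaining scenarios. That number is always at least $1$. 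It equals $1$ only when such a sub-list is unique, which holds for non-degenerate problems (the wait-and-judge setting) but not under Assumptions~\ref{asm:E!} and~\ref{asm:uniqueness} alone.

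For instance, take $\min x$ subject to $x\ge 0$, $x\ge\delta_i$, with $\mathbb P\{\delta=0\}=\mathbb P\{\delta=1\}=1/2$. The left-hand side at $m=2$ is $\frac14+2\cdot\frac12=\frac54$, because for $(\delta_1,\delta_2)=(1,1)$ both singletons qualify. Your properties (a) and (b) do not exclude this multiplicity of support lists, so the moment-problem route as sketched does not reach the bound $\beta$. Controlling that multiplicity is the additional work the proof in \cite{garatti2024non-convex} must do beyond the non-degenerate argument. For that step you have to rely on the reference rather than on your reconstruction.
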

Equation \eqref{eq:epsNBeta} can be efficiently solved by bisection; consequently, evaluating $\epsilon_{N,\beta}$ for a given $k$ is computationally cheap (see Appendix B.1 in \cite{campi2023compression} for a ready-to-use MATLAB code). Note that we include both $N$ and $\beta$ in the notation of the function $\epsilon_{N,\beta}(\cdot)$ to highlight that its definition depends explicitly on the number of scenarios in \eqref{eq:decision-making}, and the desired confidence.

Observe that the probability appearing in  \eqref{eq:eps-1-beta} is $\mathbb P^N$ since the random quantities $x_N^\star$ and $s_N^\star$ that are involved depend on the $N$ i.i.d. scenarios $(\delta_1, \dots, \delta_N)$ and are hence defined on the product probability space $(\Delta^N, \mathcal D^{\otimes N}, \mathbb P^N)$. Inequality \eqref{eq:eps-1-beta} means that there is a probability of at least $1-\beta$ that the data-set drawn from $\Delta^N$ is such that its associated complexity $s_N^\star$ leads to a value of $\epsilon_{N,\beta}(s_N^\star)$ for which the bound $V(x_N^\star) \leq \epsilon_{N,\beta}(s_N^\star)$ holds.
Thus, $\epsilon_{N,\beta}(s_N^\star)$ is the robustness level of the optimal solution to the scenario program \eqref{eq:decision-making}, which is guaranteed with confidence $1-\beta$. Function $\epsilon_{N,\beta}(\cdot)$ is non-decreasing so that (as expected) the higher the complexity is, the weaker the robustness guarantees are, in terms of bound on the risk. 

A few remarks are in order to highlight some challenges in applying these results to a non-convex settings. Firstly, they refer to the optimal solution, which may be hard to compute. Secondly, computing the minimal support list may be a very hard combinatorial problem. While useful guarantees can still be obtained by identifying any support list (not necessarily minimal, because function $\epsilon_{N,\beta}(\cdot)$ is non-decreasing), this can remain computationally challenging. Indeed, while in the convex case it is often the case that a support list is almost surely determined by those constraints that are active at the solution (i.e., it is formed by the $\delta_i$'s  such that $h_\ell(x_N^\star,\delta_i)=0$, for some $\ell \in \{1,\dots,q\}$, where $h_\ell$ denotes the $\ell$-th component of the constraint function $h$ in \eqref{eq:decision-making}), for non-convex scenario problems scenarios resulting in non-active constraints may be part of support lists. Thus, finding a support list may require removing scenarios one by one and repeatedly solving the non-convex program. 
Finally, there is in general no \textit{a priori} bound on $s_N^\star$, which may be as large as $N$. This prevents one to \textit{a priori} determine a sufficiently large data-set size guaranteeing that the resulting risk $V(x_N^\star)$ does not exceed a user-chosen threshold with high confidence.

\section{Robustness certificates for non-convex scenario problems with additively-uncertain constraints}\label{sec:main}

This section presents the theoretical contributions of the paper.
We start by showing that the risk of the non-convex scenario problem in \eqref{eq:SP} can be upper bounded by the risk of a suitably defined convex scenario problem, for any data-set of scenarios, and any probability space $(\Delta,\mathcal D,\mathbb P)$.
Based on this result, we derive \textit{a posteriori} and \textit{a priori} risk certificates for problem \eqref{eq:SP}. 

Suppose that the additively-uncertain scenario program \eqref{eq:SP} satisfies the feasibility Assumption~\ref{asm:E!} and let $\hat x_N^\star \in \mathcal X$ denote a possibly sub-optimal yet feasible solution (e.g., obtained by a numerical solver that ensures feasibility but not optimality). 

Consider the following \textit{convex} scenario program
    \begin{align}\label{eq:SP:c}
             \max_{\xi \in \mathbb R^q} \hspace{.5cm}& 
             \sum_{\ell = 1}^q \xi_\ell \\
             \text{s.t.} \hspace{.5cm}
             & 
             \xi_\ell \leq b_\ell(\delta_i), \quad \ell=1,\dots,q, \; i = 1,\dots,N, \nonumber
    \end{align}
where  $b_\ell(\cdot)$ is the $\ell$-th component of the function $b(\cdot)$ appearing in the constraints of \eqref{eq:SP}.   
The linear program \eqref{eq:SP:c} is always feasible and has a unique solution given by $\xi_N^\star=[\xi_{N,1}^\star\,\ldots\, \xi_{N,q}^\star]^\top$, where
\begin{equation}\label{eq:solXi}
        \xi_{N,\ell}^\star = \min_{i=1,\dots,N} b_\ell(\delta_i), \quad \ell=1,\dots,q.
    \end{equation}
Denote with $V'(\xi_N^\star)$ the risk associated to the solution $\xi_N^\star$ of the scenario program \eqref{eq:SP:c}: 
    \[
    V'(\xi_N^\star) = \mathbb P\{\delta \in \Delta : \xi_{N}^\star \not\le b(\delta)\}.
    \]
We can now state our first result. 

\begin{theorem}\label{th:comp}
Consider the scenario programs \eqref{eq:SP} and \eqref{eq:SP:c} associated to the same data-set $(\delta_1, \dots, \delta_N)$ of scenarios drawn independently from $(\Delta, \mathcal D, \mathbb P)$. For any probability $\mathbb P$, for any data-set $(\delta_1, \dots, \delta_N)$, the risk of a possibly sub-optimal solution $\hat x_N^\star$ of the scenario program \eqref{eq:SP} is upper bounded by the risk of the optimal solution $\xi_N^\star$ of the scenario program \eqref{eq:SP:c}, i.e., 
    \begin{equation}\label{eq:boundRisk}
        V(\hat x_N^\star) \leq V'(\xi_N^\star).
    \end{equation}
\end{theorem}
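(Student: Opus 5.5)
The plan is to prove a set inclusion between the two violation events and then use monotonicity of $\mathbb P$. The key observation is that feasibility of $\hat x_N^\star$ for \eqref{eq:SP} pins $g(\hat x_N^\star)$ below the componentwise minimum of the sampled right-hand sides. That minimum is exactly $\xi_N^\star$ by \eqref{eq:solXi}.

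\emph{Step 1 (componentwise domination).} Fix any data-set $(\delta_1,\dots,\delta_N)$. Since $\hat x_N^\star$ is feasible for \eqref{eq:SP}, we have $g_\ell(\hat x_N^\star)\le b_\ell(\delta_i)$ for all $\ell=1,\dots,q$ and all $i=1,\dots,N$. Taking the minimum over $i$ for each $\ell$ and using \eqref{eq:solXi} gives
\[
g(\hat x_N^\star)\le \xi_N^\star
\]
componentwise. Equivalently, $g(\hat x_N^\star)$ is a feasible point of \eqref{eq:SP:c}, and $\xi_N^\star$ is its componentwise-maximal feasible point.

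\emph{Step 2 (inclusion of violation sets).} Let $\delta\in\Delta$ be such that $\hat x_N^\star$ is inappropriate, i.e.\ $g(\hat x_N^\star)\not\le b(\delta)$. Then some index $\ell$ satisfies $g_\ell(\hat x_N^\star)>b_\ell(\delta)$. By Step 1 this gives $\xi_{N,\ell}^\star\ge g_\ell(\hat x_N^\star)>b_\ell(\delta)$, so $\xi_N^\star\not\le b(\delta)$. Hence
\[
\{\delta\in\Delta:\ g(\hat x_N^\star)\not\le b(\delta)\}\subseteq\{\delta\in\Delta:\ \xi_N^\star\not\le b(\delta)\}.
\]

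\emph{Step 3 (conclusion).} Apply monotonicity of $\mathbb P$ to the inclusion in Step 2. The left-hand set has probability $V(\hat x_N^\star)$ by \eqref{eq:rsk} with $h(x,\delta)=g(x)-b(\delta)$, and the right-hand set has probability $V'(\xi_N^\star)$. This yields \eqref{eq:boundRisk}. The argument is pointwise in the data-set and uses no property of $\mathbb P$ beyond monotonicity, which is why the bound holds for every $\mathbb P$ and every data-set.

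There is no real obstacle here. The only point needing care is measurability: both sets must belong to $\mathcal D$ so that the probabilities are defined. I would take this as implicit in the paper's definition of risk, since $b$ is assumed measurable and $g(\hat x_N^\star)$ and $\xi_N^\star$ are fixed vectors once the data-set is fixed.
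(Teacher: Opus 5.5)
Your proposal is correct and follows essentially the same route as the paper. Feasibility of $\hat x_N^\star$ gives $g(\hat x_N^\star)\le \xi_N^\star$ componentwise, which yields the inclusion of the violation events and hence $V(\hat x_N^\star)\le V'(\xi_N^\star)$ by monotonicity of $\mathbb P$; your direct min-over-$i$ derivation of Step 1 and your measurability remark are slightly more explicit than the paper's ``nested constraints'' wording, but the argument is the same.
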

\begin{proof}
For each $\ell = 1,\dots, q$, the constraints $g_\ell(x) \leq b_\ell(\delta_i)$, $i = 1,\dots,N$, in \eqref{eq:SP} are nested and dominated by the one with the smallest $b_\ell(\delta_i)$. Thus, the optimal solution $\xi_{N}^\star$  to the linear program \eqref{eq:SP:c} given in \eqref{eq:solXi} defines the whole feasibility region of \eqref{eq:SP}, by using $g(x) \leq \xi_N^\star$ in place of its constraints. It therefore follows that $g_\ell(\hat x_N^\star) \leq \xi_{N,\ell}^\star$ is satisfied for all $\ell = 1,\dots, q$. Because of this, for any $\delta \in \Delta$, the following implication holds
    \begin{equation*}
        g_\ell(\hat x_N^\star) > b_\ell(\delta) \implies \xi_{N,\ell}^\star > b_\ell(\delta).
    \end{equation*}
    Thus, for any $\ell = 1,\dots, q$, 
    \begin{equation*}
 \{\delta\in\Delta:  g_\ell(x_N^\star)>b_\ell(\delta)\}
        \subseteq 
        \{\delta\in\Delta: \xi_{N,\ell}^\star>b_\ell(\delta)\},
    \end{equation*}
    which implies
    \begin{align*}
        &V(\hat x_N^\star) = \mathbb P\{\delta\in\Delta: \exists \ell \in \{1,\dots,q\} \text{ s.t. } g_\ell(\hat x_N^\star)>b_\ell(\delta)\}\\
        &\le V'(\xi_N^\star) = \mathbb P\{\delta \in \Delta : \exists \ell \in \{1,\dots,q\} \text{ s.t. } \xi_{N,\ell}^\star >  b_\ell(\delta)\},
    \end{align*}
    thus concluding the proof since all derivations hold for any data-set $(\delta_1, \dots, \delta_N)$ and any~$\mathbb P$.    
\end{proof}

Theorem~\ref{th:comp} exposes a simple but fundamental property of the non-convex scenario program in \eqref{eq:SP}. 
Indeed, it demonstrates that the risk associated to any feasible, possibly sub-optimal, solution $\hat x_N^\star$ is related to the risk of the solution of a much simpler, convex linear program, $\xi_N^\star$.
The two are not, however, equivalent, as there may exist $\delta \in \Delta$ such that $g_\ell(\hat x_N^\star) \leq b_\ell(\delta)$ holds for all $\ell = 1,\dots, q$, while there is an $\ell$ for which $b_\ell(\delta) < \xi_{N,\ell}^\star$ (this corresponds to the situation in which adding a $\delta$ and the corresponding constraints reduces the feasible region of \eqref{eq:SP} while maintaining $\hat x_N^\star$ within it).  
A representation of this gap is presented in Figure~\ref{fig:gVxi}, for a simplified case in which $q = 1$.

\begin{figure}
    \centering
    \includegraphics[width=0.95\linewidth]{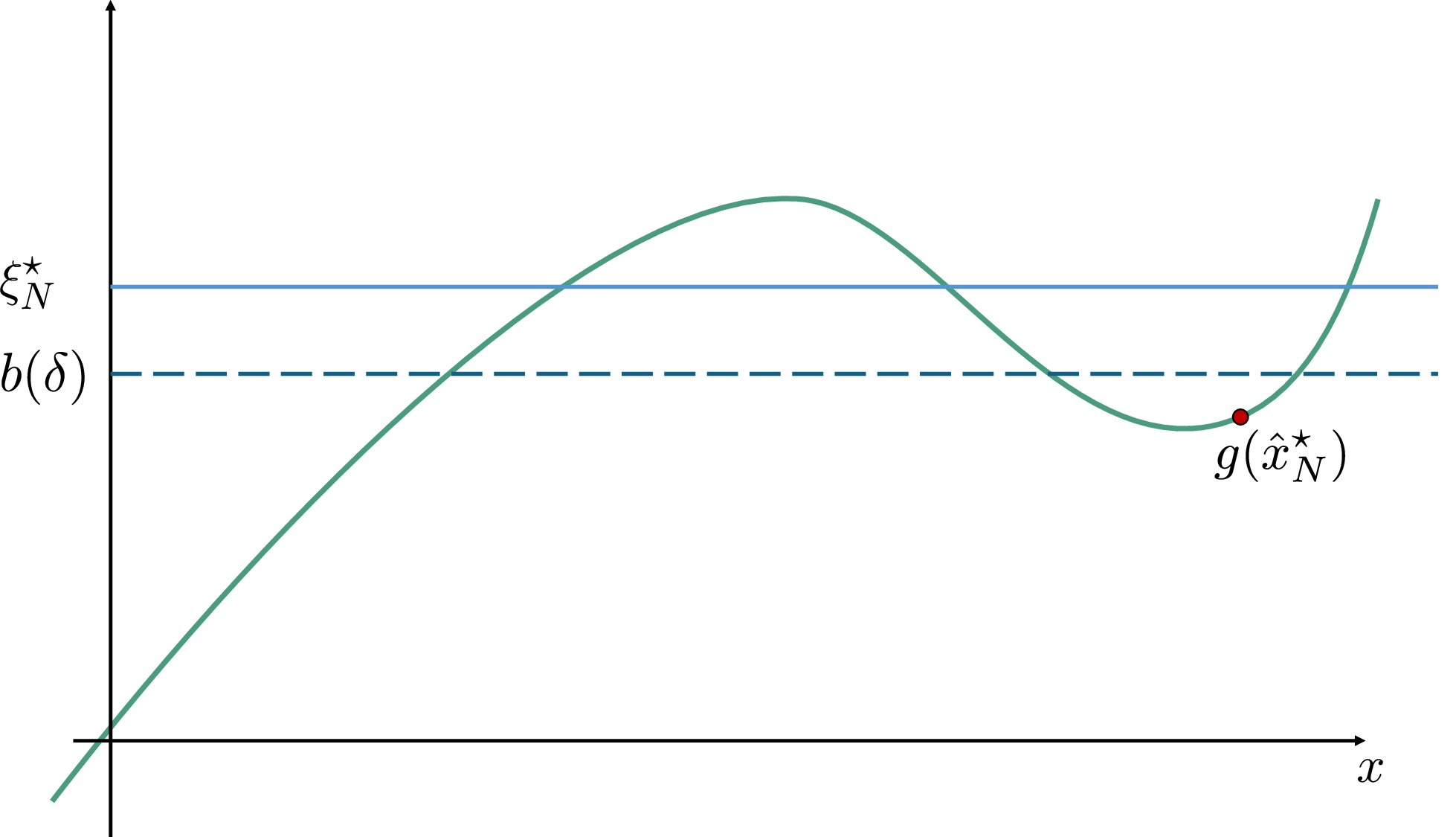}
    \caption{Visualization of the gap between $V(\hat x_N^\star)$ and $V'(\xi_N^\star)$ when $q=1$. The continuous horizontal line represents $\xi_{N}^\star$, defining the feasible region $g(x) \leq \xi_{N}^\star$; the solution $\hat x_N^\star$ is such that $g(\hat x_N^\star) < \xi_{N}^\star$ (red dot). 
    The dashed horizontal line is the value of $b(\delta)$ for some $\delta \in \Delta$. While ${b(\delta) < \xi_{N}^\star}$, it holds that  ${g(\hat x_N^\star) < b(\delta)}$, meaning that $x_N^\star$ is appropriate for this value of $\delta \in \Delta$.}
    \label{fig:gVxi}
\end{figure}

Even if the two problems do not have equivalent risk, bounding the risk of the non-convex scenario problem with that of a simpler convex one may present  significant benefits, as demonstrated throughout the remainder of the paper.

We next present the consequences of Theorem~\ref{th:comp} in terms of robustness certificates. 

\begin{proposition}[a posteriori robustness certificate]\label{prop:a-posteriori-robustness}
Consider the non-convex additively-uncertain  scenario problem \eqref{eq:SP}. For any $\mathbb{P}$,  the risk of its possibly sub-optimal solution $\hat x_N^\star$ satisfies:
    \begin{equation}\label{eq:a-posteriori-bound}
        \mathbb P^N \{V(\hat x_N^\star) \leq \epsilon_{N,\beta}(\varsigma_N) \} \geq 1 - \beta,
    \end{equation}
where $\epsilon_{N,\beta}(\cdot)$ is defined in \eqref{eq:eps-def} and  $\varsigma_N$ is the number of elements in the set $\mathcal I_N = \{i_1,\dots,i_q\}$, i.e., the number of distinct $i_\ell$'s, with $i_\ell$ given by\footnote{\label{ftnt:tie}In \eqref{eq:idx}, if the minimum is attained for two (or more) scenarios $\delta_i, \delta_j$, we then assume a unique $i_\ell$ is selected via a tie-break rule, e.g., $i_\ell = \min \{i,j\}$.}
    \begin{equation}\label{eq:idx}
        i_\ell = \argmin_{i = 1,\dots,N} \, b_\ell(\delta_i)
    \end{equation}
for each $\ell=1,\dots,q$. 
\end{proposition}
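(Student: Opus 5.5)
The plan is to combine Theorem~\ref{th:comp} with Theorem~\ref{th:scenario} applied to the convex program \eqref{eq:SP:c} instead of \eqref{eq:SP}. By Theorem~\ref{th:comp}, for every data-set we have $V(\hat x_N^\star)\le V'(\xi_N^\star)$. Hence the event $\{V'(\xi_N^\star)\le \epsilon_{N,\beta}(\varsigma_N)\}$ is contained in $\{V(\hat x_N^\star)\le\epsilon_{N,\beta}(\varsigma_N)\}$. It therefore suffices to show $\mathbb P^N\{V'(\xi_N^\star)\le \epsilon_{N,\beta}(\varsigma_N)\}\ge 1-\beta$.

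To invoke Theorem~\ref{th:scenario} for \eqref{eq:SP:c}, viewed as an instance of \eqref{eq:decision-making} with decision $\xi$, cost $-\sum_\ell\xi_\ell$, and constraint $h(\xi,\delta)=\xi-b(\delta)$, I first check the assumptions.
\begin{itemize}
\item Assumption~\ref{asm:E!} holds trivially: any sufficiently negative $\xi$ is feasible.
\item Assumption~\ref{asm:uniqueness} holds because the solution is given explicitly by \eqref{eq:solXi}.
\end{itemize}
Theorem~\ref{th:scenario} then yields $\mathbb P^N\{V'(\xi_N^\star)\le\epsilon_{N,\beta}(s_N'^\star)\}\ge1-\beta$, where $s_N'^\star$ is the complexity of \eqref{eq:SP:c}.

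Since $\epsilon_{N,\beta}(\cdot)$ is non-decreasing, the remaining step is to show $s_N'^\star\le\varsigma_N$ for every data-set. I will do this by exhibiting a support list of length at most $\varsigma_N$, namely the sub-list $(\delta_i)_{i\in\mathcal I_N}$ ordered by index.
\begin{itemize}
\item The minimizer with only these scenarios has components $\min_{i\in\mathcal I_N}b_\ell(\delta_i)=b_\ell(\delta_{i_\ell})=\xi^\star_{N,\ell}$, since $i_\ell\in\mathcal I_N$ attains the global minimum. So condition (i) of Definition~\ref{def:suppSet} holds.
\item If the list is not irreducible, I remove elements one at a time, each time keeping the solution unchanged, until an irreducible sub-list is reached. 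This is always possible because the list is finite.
\end{itemize}
The result is a support list of length $\le\varsigma_N$. Because the complexity is the size of the smallest support list, $s_N'^\star\le\varsigma_N$, and so $\epsilon_{N,\beta}(s_N'^\star)\le\epsilon_{N,\beta}(\varsigma_N)$.

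The main point needing care is this support-list step, in particular ties in \eqref{eq:idx}. Under ties the sub-list indexed by $\mathcal I_N$ still reproduces $\xi_N^\star$, since any minimizing index gives the same value, so the argument is unaffected. Irreducibility is handled by the pruning argument rather than by claiming $\mathcal I_N$ itself is irreducible.

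Chaining the inclusions gives
\[
\mathbb P^N\{V(\hat x_N^\star)\le\epsilon_{N,\beta}(\varsigma_N)\}\ \ge\ \mathbb P^N\{V'(\xi_N^\star)\le\epsilon_{N,\beta}(s_N'^\star)\}\ \ge\ 1-\beta .
\]
This holds for any $\mathbb P$, which is the claim.
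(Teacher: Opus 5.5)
Your proof is correct and follows the paper's argument essentially step by step. You use Theorem~\ref{th:comp} to pass to the convex program \eqref{eq:SP:c}, observe that the scenarios indexed by $\mathcal I_N$ reproduce $\xi_N^\star$ and so bound its complexity by $\varsigma_N$, and conclude via Theorem~\ref{th:scenario} and the monotonicity of $\epsilon_{N,\beta}(\cdot)$; your explicit check of Assumptions~\ref{asm:E!} and~\ref{asm:uniqueness} and your pruning argument for irreducibility (including under ties) just spell out what the paper states more briefly, partly in its footnote.
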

\begin{proof}
From Theorem~\ref{th:comp}, it follows that 
\begin{equation*}
        \mathbb P^N \{V(\hat x_N^\star) \leq \epsilon_{N,\beta}(\varsigma_N) \} \geq         \mathbb P^N \{V'(\xi_N^\star) \leq \epsilon_{N,\beta}(\varsigma_N) \} 
\end{equation*}
so that we just need to prove that  
\begin{equation}\label{eq:bound}
        \mathbb P^N \{V'(\xi_N^\star) \leq \epsilon_{N,\beta}(\varsigma_N) \} \geq 1 - \beta.
\end{equation}
To this aim, we start by noting that, for each $\ell = 1,\dots, q$, $\xi_\ell \leq b_\ell(\delta_i)$, $i = 1,\dots,N$, defines a set of nested constraints that are dominated by the one with $b_\ell(\delta_{i_\ell})$. It follows that $\xi_{\mathcal{I}_N}^\star = \xi_N^\star$, where $\xi_{\mathcal{I}_N}^\star$ is the solution of \eqref{eq:SP:c} with only those scenarios indexed by $\mathcal I_N$ in place. 
Thus, the list of scenarios indexed by $\mathcal I_N$ surely contains a support list for \eqref{eq:SP:c} as a sub-list,\footnote{In fact, $\mathcal I_N$ yields the minimal support list whenever, for each $\ell$, the minimum $b_\ell(\delta_i)$ in \eqref{eq:idx} is attained by a unique scenario. When, instead, a tie-break rule as in footnote \ref{ftnt:tie} must be invoked, $\mathcal I_N$ may even fail to be irreducible. To always attain the minimal support list, a tie-break rule minimizing $\varsigma_N$ can be adopted, albeit at the expense of higher computational effort.}  and we have that the complexity $s_N^\star$ of problem \eqref{eq:SP:c} is upper bounded by  $\varsigma_N$. Since $\epsilon_{N,\beta}(\cdot)$ is non-decreasing for any $N$ and $\beta$, equation \eqref{eq:bound} follows from Theorem~\ref{th:scenario}.
\end{proof}

\begin{remark}[comparison with previous works] \label{rmk:comp}
In \cite{gallo2025robust}, the result in Proposition~\ref{prop:a-posteriori-robustness} was proven for the optimal solution $x^\star_N$ of \eqref{eq:SP} by using \eqref{eq:eps-1-beta} in the scenario Theorem~\ref{th:scenario} and observing that the complexity $s_N^\star$ satisfies 
$s_N^\star \le \varsigma_N \le q$. Indeed, $\mathcal I_N$ defines the feasibility region of \eqref{eq:SP}, so that the solution $x_{\mathcal I_N}^\star$ of \eqref{eq:SP} with only those scenarios indexed by $\mathcal I_N$ in place satisfies $x_{\mathcal I_N}^\star = x_N^\star$. Hence, the list of scenarios indexed by $\mathcal I_N$ must be (or contain) a support list as per Definition~\ref{def:suppSet}. 
 $\hfill\triangleleft$
\end{remark}

Besides being applicable to any feasible suboptimal solution, the main advantage of Proposition~\ref{prop:a-posteriori-robustness} lies in the negligible computational effort required to compute $\varsigma_N$. Consequently, the certification $\epsilon_{N,\beta}(\varsigma_N)$ is inexpensive to obtain. 
	
The downside of Proposition \ref{prop:a-posteriori-robustness} is that $\epsilon_{N,\beta}(\varsigma_N)$ essentially evaluates $V'(\xi_N^\star)$, which, as discussed earlier, may differ from $V(\hat{x}_N^\star)$. This discrepancy may lead to conservative evaluations. Nevertheless, although no formal guarantees can be provided, in many additively-uncertain problems of interest the gap tends to be minor, as also illustrated by the numerical example in Section~\ref{sec:UCP}. Furthermore, using Proposition~\ref{prop:a-posteriori-robustness} for a first evaluation does not preclude the user from taking additional actions 
if the initial, computationally inexpensive certification $\epsilon_{N,\beta}(\varsigma_N)$ proves unsatisfactory.
For instance, when the optimal solution $x_N^\star$ can be computed, one can try to obtain a more accurate evaluation of the actual complexity at the price of additional computational effort, as discussed in Section \ref{subsec:UCP-robustness} with reference to the unit commitment problem.
\begin{remark}[computational considerations] \label{rmk:computational}
The fact that $\xi_N^\star$ suffices to characterize the feasible region of additively-uncertain problems can be exploited to also reduce the computational complexity to obtain an optimal or suboptimal solution $x_N^\star$ or $\hat{x}_N^\star$, by solving (exactly or approximately)
\begin{align} \label{eq:reduced-SP:c}
	\min_{x\in\mathcal X} \quad &f(x)    \\
	\text{s.t.} \quad & g_\ell(x)\leq \xi^\star_{N,\ell}, \quad \ell=1,\dots,q,  \nonumber
\end{align}
in place of \eqref{eq:SP}. This typically entails discarding a large number of constraints, since in general $q \ll N$. $\hfill\triangleleft$
\end{remark}

A priori robustness guarantees can be provided by observing that $\varsigma_N$ is upper bounded by $q$ and recalling that $\epsilon_{N,\beta}(\cdot)$ is non-decreasing for any $N$ and $\beta$, thus getting 
\begin{equation}\label{eq:a-priori-with-a-posteriori} 
    \mathbb P^N \{V(x_N^\star) \leq \epsilon_{N,\beta}(q) \} \geq 1-\beta
\end{equation}
from equation \eqref{eq:a-posteriori-bound} in Proposition~\ref{prop:a-posteriori-robustness}. 
An improved bound can be derived by exploiting Theorem~\ref{th:comp} and the \textit{a priori} guarantees for convex scenario problems in \cite{campi2008exact}. 

\begin{proposition}[a priori robustness certificate]\label{prop:a-priori-robustness}
Consider the non-convex additively-uncertain  scenario problem \eqref{eq:SP}. For any $\mathbb{P}$,  the risk of its possibly sub-optimal solution $\hat x_N^\star$ satisfies:
    \begin{equation}\label{eq:nc:aPri}
        \mathbb P^N \{V(\hat x_N^\star) \leq \varepsilon \} \geq 1 - \sum_{m = 0}^{q-1} \binom{N}{m} \varepsilon^m (1-\varepsilon)^{N-m},
    \end{equation}
    for any $\varepsilon \in (0,1)$, where $q$ is the number of components of function $g(\cdot)$ in \eqref{eq:SP}.
\end{proposition}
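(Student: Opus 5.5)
The plan is to reduce the claim to the classical \emph{a priori} bound for convex scenario programs from \cite{campi2008exact}, applied to the linear program \eqref{eq:SP:c}. By Theorem~\ref{th:comp}, for every data-set we have $V(\hat x_N^\star)\le V'(\xi_N^\star)$. Hence
\[
\{V'(\xi_N^\star)\le\varepsilon\}\subseteq\{V(\hat x_N^\star)\le\varepsilon\}
\]
as subsets of $\Delta^N$, which gives
\[
\mathbb P^N\{V(\hat x_N^\star)\le\varepsilon\}\ge\mathbb P^N\{V'(\xi_N^\star)\le\varepsilon\}.
\]
It therefore suffices to establish \eqref{eq:nc:aPri} with $V'(\xi_N^\star)$ in place of $V(\hat x_N^\star)$.

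Next I would check that \eqref{eq:SP:c} meets the hypotheses of the main theorem in \cite{campi2008exact}. That theorem asks for a convex program in $d'$ decision variables with convex constraints, feasibility, and existence and uniqueness of the solution for every $N$ and every data-set. Here the decision variable is $\xi\in\mathbb R^q$, the objective is linear, and each scenario contributes the convex (in fact polyhedral) constraint set $\{\xi:\xi\le b(\delta_i)\}$. The problem is always feasible, and its unique solution is given explicitly by \eqref{eq:solXi}. Rewriting the maximization as minimization of $-\sum_\ell\xi_\ell$ puts it in the standard form. The dimension of the decision variable is $d'=q$, so the Campi--Garatti result yields
\[
\mathbb P^N\{V'(\xi_N^\star)>\varepsilon\}\le\sum_{m=0}^{q-1}\binom{N}{m}\varepsilon^m(1-\varepsilon)^{N-m}.
\]
Taking complements gives the desired bound.

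The step I expect to need the most care is this application of \cite{campi2008exact}. Its proof relies on the fact that, for convex problems, the number of support constraints is at most the dimension $d'$; this holds here because each scenario is a \emph{vector} of $q$ constraints $\xi\le b(\delta)$, viewed as a single convex constraint set. It also relies on the i.i.d.\ structure, and possibly on a nondegeneracy or tie-breaking convention. The support-constraint count can be checked directly: at most the $q$ scenarios indexed by $\mathcal I_N$ in \eqref{eq:idx} are of support, since removing any other scenario leaves each minimum in \eqref{eq:solXi} unchanged. For ties, the result of \cite{campi2008exact} holds in the general case without a nondegeneracy assumption, where the bound is attained with equality only for fully-supported problems, so no extra hypothesis on $\mathbb P$ is needed. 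Alternatively, I could handle ties by the tie-break rule of footnote~\ref{ftnt:tie}, which keeps the solution unique; uniqueness is automatic anyway because the solution is given in closed form. Combining this with the inclusion from Theorem~\ref{th:comp} concludes the proof for any $\mathbb P$ and any $\varepsilon\in(0,1)$.
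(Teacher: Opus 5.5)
Your proposal is correct and matches the paper's proof: you use the event inclusion from Theorem~\ref{th:comp} and then apply \cite[Theorem 1]{campi2008exact} to the $q$-dimensional linear program \eqref{eq:SP:c}. The one hypothesis of that theorem that the paper lists and you do not check explicitly is that the feasibility domain has non-empty interior; this holds immediately, since the feasible set of \eqref{eq:SP:c} is the translated orthant $\{\xi:\xi\le\xi_N^\star\}$.
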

\begin{proof}
Based on Theorem~\ref{th:comp}, we have that 
\begin{equation*}
\mathbb P^N \{V(\hat x_N^\star) \leq \varepsilon \} \ge \mathbb P^N \{V'(\xi_N^\star) \leq \varepsilon \}.
\end{equation*}
In turn, given that the scenario program \eqref{eq:SP:c} is convex, it satisfies Assumptions~\ref{asm:E!} and~\ref{asm:uniqueness}, and its feasibility domain has non-empty interior, we can apply \cite[Theorem 1]{campi2008exact}, thus obtaining 
    \begin{equation*}
\mathbb P^N \{V'(\xi_N^\star) \leq \varepsilon \} \geq 1 - \sum_{m = 0}^{q-1} \binom{N}{m} \varepsilon^m (1-\varepsilon)^{N-m}
    \end{equation*}
(note that $q$ is also the dimension of the decision variable $\xi$ in \eqref{eq:SP:c}).
\end{proof}
Proposition~\ref{prop:a-priori-robustness} states that the distribution of the risk is upper bounded by a Beta distribution. Given a confidence parameter $\beta \in (0,1)$, an \textit{a priori} robustness guarantee that holds with confidence $1-\beta$ is obtained by computing the minimum value of $\varepsilon \in (0,1)$ satisfying  
\begin{equation*}
    \sum_{m=0}^{q-1} \binom{N}{m} \varepsilon^m(1-\varepsilon)^{N-m} \leq \beta,
\end{equation*}
so that $\mathbb{P}^N \{V(\hat x_N^\star) \leq \varepsilon \} \geq 1-\beta$.

\begin{remark}[tightness of the \textit{a priori} robustness guarantees]\label{rem:a-priori}
It is worth noting that the result is tighter than the one in \eqref{eq:a-priori-with-a-posteriori} in that the obtained $\varepsilon$ is smaller than  $\epsilon_{N,\beta}(q)$. As noted in the discussion after \cite[Corollary~8]{garatti2024non-convex}, Proposition~\ref{prop:a-priori-robustness} does not generally hold for non-convex problems with complexity upper bounded by $q$. The validity of the result in the present setting relies crucially on the structure of problem~\eqref{eq:SP}. This bears some similarities with some results in the literature of the scenario theory developed for convex problems, \cite{schildbach2014scenario,zhang2015sample}. $\hfill\triangleleft$  
\end{remark}

\section{Experiment sizing in non-convex scenario problems with additively-uncertain constraints}\label{sec:SampleSize}
\noindent

In this section, we discuss how the bounds on the risk derived in Propositions~\ref{prop:a-posteriori-robustness} and~\ref{prop:a-priori-robustness} can be used not only to assess the robustness properties of the solution to the additively-uncertain  scenario program \eqref{eq:SP}, but also to decide the size of the data-set so as to secure desired robustness levels. 
This shows that the results of \cite{campi2008exact} and \cite{garatti2023complexity} also apply in the non-convex additively-uncertain  setting.

\subsection{One-shot data-set sizing using the \textit{a priori} guarantees}

\noindent
Proposition~\ref{prop:a-priori-robustness} with the robustness guarantees in equation \eqref{eq:nc:aPri} immediately provides a means to \textit{a priori} determine the size $N$ of the data-set $(\delta_1, \dots,\delta_N)$ so as to guarantee a certain user-defined bound $\bar\epsilon \in (0,1)$ on the risk level with a given confidence $1-\beta$ as stated in the following theorem. 

\begin{theorem} \label{th:1shot}
 Given $\beta \in (0,1)$ and $\bar\epsilon \in (0,1)$, set the data-set size $N$ as 
    \begin{equation}\label{eq:oneshot}
         N=\min\big\{M \ge q: \, \sum_{m = 0}^{q-1}\binom{M}{m}\bar\epsilon^m (1-\bar\epsilon)^{M-m} \leq  
         \beta\big\}.  
    \end{equation}   
Then, for any probability $\mathbb P$, any possibly sub-optimal solution $\hat x_{N}^\star$ to the scenario problem \eqref{eq:SP} with a data-set of size $N$ satisfies 
    \begin{equation}\label{eq:bound:oneshot}
        \mathbb P^{N} \{V(\hat x_{N}^\star) \leq \bar\epsilon\} \geq 1 - \beta.
    \end{equation} 
\end{theorem}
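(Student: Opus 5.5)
The plan is to obtain the claim as a direct corollary of Proposition~\ref{prop:a-priori-robustness}, specialized to $\varepsilon=\bar\epsilon$ and to the data-set size $N$ in \eqref{eq:oneshot}. Two things have to be checked: that $N$ is well defined, i.e., the set in \eqref{eq:oneshot} is non-empty so that the minimum exists, and that with this $N$ the right-hand side of \eqref{eq:nc:aPri} is at least $1-\beta$.

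For well-posedness, I will show that the Binomial tail
\[
\Phi(M) := \sum_{m=0}^{q-1}\binom{M}{m}\bar\epsilon^m(1-\bar\epsilon)^{M-m}
\]
tends to $0$ as $M\to\infty$. The sum has finitely many terms ($q$ is fixed), and each term is bounded by $M^m(1-\bar\epsilon)^{M-m}$ up to constants. Since $\bar\epsilon\in(0,1)$, each such term goes to zero. Hence there exists $M\ge q$ with $\Phi(M)\le\beta$, and the minimum in \eqref{eq:oneshot} is attained at some finite $N\ge q$. Monotonicity of $\Phi$ in $M$ is not needed, because we only use the defining property of the selected $N$, namely $\Phi(N)\le\beta$.

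Next, I will apply Proposition~\ref{prop:a-priori-robustness} with data-set size $N$ and $\varepsilon=\bar\epsilon\in(0,1)$. This gives
\[
\mathbb P^N\{V(\hat x_N^\star)\le\bar\epsilon\}\ge 1-\Phi(N)\ge 1-\beta,
\]
which is \eqref{eq:bound:oneshot}. This bound holds for any $\mathbb P$ and any feasible, possibly sub-optimal, $\hat x_N^\star$, since Proposition~\ref{prop:a-priori-robustness}, via Theorem~\ref{th:comp}, is uniform in these.

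The only mildly delicate point is the applicability of \cite[Theorem 1]{campi2008exact} inside Proposition~\ref{prop:a-priori-robustness}. That result requires the problem to be feasible for the chosen $N$, which Assumption~\ref{asm:E!} guarantees for \eqref{eq:SP} at every $N$, and the convex program \eqref{eq:SP:c} is always feasible with a unique solution. The requirement $M\ge q$ in \eqref{eq:oneshot} ensures that $N\ge q$, the regime in which the Beta-type bound is meaningful. So I expect no real obstacle beyond the existence argument for the minimum.
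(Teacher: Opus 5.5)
Your proposal is correct and matches the paper's proof, which simply plugs $\varepsilon=\bar\epsilon$ and the $N$ from \eqref{eq:oneshot} into Proposition~\ref{prop:a-priori-robustness}. You also check that the minimum in \eqref{eq:oneshot} exists, since the Binomial tail vanishes as $M\to\infty$ for $\bar\epsilon\in(0,1)$; this is a valid detail that the paper leaves implicit.
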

\begin{proof}
    The result follows directly from Proposition~\ref{prop:a-priori-robustness}, as replacing \eqref{eq:oneshot} in \eqref{eq:nc:aPri} leads to \eqref{eq:bound:oneshot}.
\end{proof}

\begin{figure}
    \centering
    \includegraphics[width=0.95\linewidth]{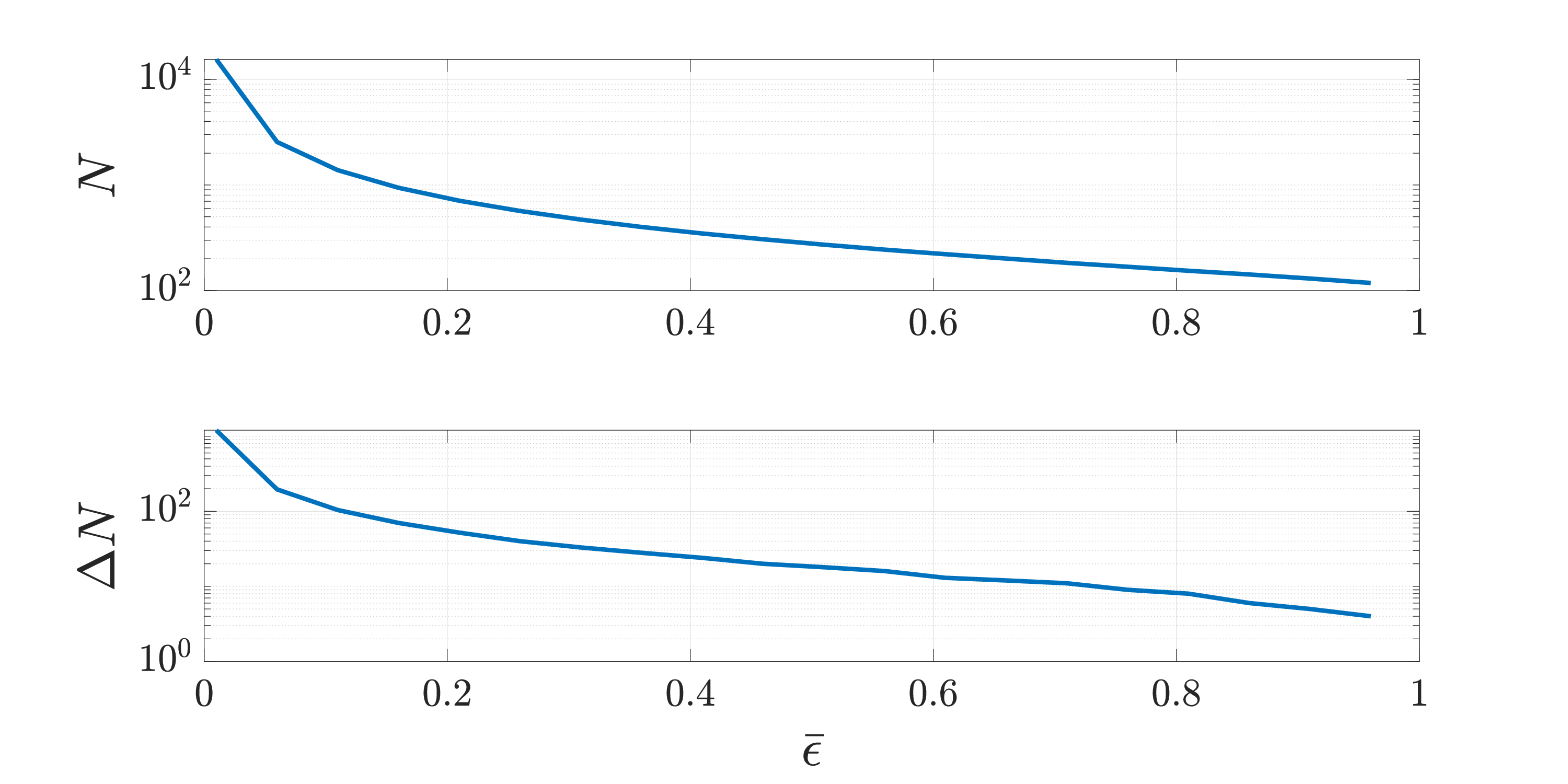}
    \caption{Data-set size $N$ computed with \eqref{eq:oneshot} (top plot) and  increment $\Delta N$ of the data-set size when computed with \eqref{eq:Ncheck}  (bottom plot) as a function of $\bar \epsilon$ for $q = 100$ when $\beta = 10^{-6}$.} 
    \label{fig:Ncomp}
\end{figure}

Figure~\ref{fig:Ncomp} shows the value of $N$  in \eqref{eq:oneshot} and the difference  with respect to the (larger, see Remark~\ref{rem:a-priori}) value of $N$ calculated via 
\begin{equation}\label{eq:Ncheck}
    N = \min\{ M \geq q: \epsilon_{M,\beta}(q)\leq \bar{\epsilon}\},  
\end{equation}
as a function of $\bar\epsilon$, for $q = 100$ with $\beta = 10^{-6}$.

\subsection{Incremental data-set sizing using the \textit{a posteriori} guarantees}

\noindent
While the selection of $N$ as in Theorem \ref{th:1shot} safeguards against the worst possible situations, as highlighted in \cite{garatti2023complexity} with reference to the convex setting, depending on the problem, it may be that for some realizations of the scenarios $\delta_1,\ldots,\delta_N$ one could use only a smaller portion of them, and yet satisfy the bound $\bar \epsilon$ on the actual risk. Reducing $N$ is advantageous for example when  collecting data  is costly. 

To overcome the conservativeness of the result in Theorem~\ref{th:1shot}, we shall next adapt  the incremental approach proposed in \cite[Algorithm 1]{garatti2023complexity} to the non-convex additively-uncertain  scenario problem \eqref{eq:SP}. In this approach, while the data-set sizing is \textit{a posteriori} tuned, an \textit{a priori} certification of the solution robustness level is retained.

Given $\bar\epsilon \in (0,1)$ and $\beta \in (0,1)$, define 
\begin{align}
N_j =  \min \Big\{ &N >\bar{M}_j :  \vphantom{ \sum_{m=j}^{\bar{M}_j} } \label{eq:defNj}\\
            &
            \beta_j \sum_{m=j}^{\bar{M}_j} \binom{m}{j} (1-\bar\epsilon)^{m-j} \geq \binom{N}{j}(1-\bar\epsilon)^{N-j}
            \Big\} \nonumber
\end{align}
with $\beta_j = \tfrac{\beta}{(q+1)(\bar{M}_j+1)}$, for $j = 0,1,\dots,q$, where 
\begin{equation*}
    \bar{M}_j = \min \left\{
    M \geq j : \sum_{m = 0}^{j-1} \binom{M}{m} \bar\epsilon^m (1-\bar\epsilon)^{M-m} \leq \beta
    \right\},
\end{equation*}
for $j = 1,\dots,q$ and $\bar M_0 = \bar M_1$.
The incremental approach is described in Algorithm~\ref{alg:Nj} and it works as follows: at each iteration $j$, $N_j$ scenarios are gathered by adding scenarios to those already collected and $\varsigma_{N_j}$ is computed as an upper bound on the complexity of \eqref{eq:SP:c} with $N_j$ in place of $N$; 
then, if $\varsigma_{N_j} > j$, $j$ is increased and a new iteration is started;
otherwise, problem~\eqref{eq:SP} for the presently collected $N_j$ scenarios is solved (via formulation \eqref{eq:reduced-SP:c} to reduce the computational complexity -- see Remark~\ref{rmk:computational}) and the result $x^\star$ is returned. The algorithm is guaranteed to terminate since $\varsigma_{N_q} \leq q$ is always guaranteed.

\begin{algorithm}[t]
    \caption{Incremental scenario solution}\label{algo}
    \begin{algorithmic}[1]
        \renewcommand{\algorithmicrequire}{\textbf{Input:}}
        \renewcommand{\algorithmicensure}{\textbf{Output:}}
        \REQUIRE $N_j$, $j = 0,1,\dots, q$, from \eqref{eq:defNj}
        \ENSURE $x^\star$ 
        \STATE $j \gets 0$, $N_{-1} \gets 0$;
        \STATE collect i.i.d. data $\delta_{N_{j-1}+1},\dots \delta_{N_j}$, independent of previous scenarios
        \STATE $\mathcal I_{N_j} \gets \{i_{1},\dots, i_q\}$ with $i_\ell=     \arg\min_{i = 1,\dots,N_j} b_\ell(\delta_i)$ 
        \STATE $\varsigma_{N_j} \gets$ number of distinct  indices in   $\mathcal{I}_{N_j}$ \label{step-complexity}
        \IF{$\varsigma_{N_j} \leq j$}
            \STATE {
                 $x^\star \gets$ possibly sub-optimal solution to \\
                 $\begin{array}{rl}
                    \displaystyle \min_{x\in\mathcal X} & f(x) \\
                    \text{s.t.}& g_\ell(x) \leq b_\ell(\delta_{i_\ell}), \; \ell = 1,\dots,q
                \end{array}$
            }
            \STATE \textbf{return $x^\star$}             
        \ELSE
            \STATE $j \gets j+1$;
            \STATE go to step 2;
        \ENDIF
    \end{algorithmic}
    \label{alg:Nj}
\end{algorithm}

The following theorem summarizes the result.
\begin{theorem}\label{th:incr}
Given $\bar\epsilon \in (0,1)$ and $\beta \in (0,1)$, the output  $x^\star$ returned by Algorithm~\ref{alg:Nj} satisfies
    \begin{equation*}
        \mathbb P^{N_q} \{ V(x^\star) \leq \bar \epsilon \} \geq 1-\beta,
    \end{equation*}
for any probability $\mathbb P$.
\end{theorem}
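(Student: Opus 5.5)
Our plan is to reduce the statement to the convex auxiliary program \eqref{eq:SP:c} and then follow the argument behind \cite[Algorithm 1]{garatti2023complexity}, which certifies incrementally sized convex scenario programs. Let $\tau$ be the iteration at which Algorithm~\ref{alg:Nj} stops, and extend the data to $N_q$ i.i.d.\ scenarios so that everything is defined on $(\Delta^{N_q},\mathcal D^{\otimes N_q},\mathbb P^{N_q})$. The returned $x^\star$ is a feasible, possibly sub-optimal, solution of \eqref{eq:SP} built from the first $N_\tau$ scenarios, since \eqref{eq:reduced-SP:c} has the same feasible set as \eqref{eq:SP}. Theorem~\ref{th:comp}, which holds for every data-set, then gives $V(x^\star)\le V'(\xi^\star_{N_\tau})$. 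We therefore only need
\begin{equation*}
\mathbb P^{N_q}\{V'(\xi^\star_{N_\tau})>\bar\epsilon\}\le\beta .
\end{equation*}

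We split the bad event according to the stopping index. By the algorithm, $\tau=j$ implies $\varsigma_{N_j}\le j$, so
\begin{equation*}
\{V'(\xi^\star_{N_\tau})>\bar\epsilon\}\subseteq\bigcup_{j=0}^{q}\{V'(\xi^\star_{N_j})>\bar\epsilon,\ \varsigma_{N_j}\le j\}.
\end{equation*}
A union bound then leaves us to show, for each $j$, that $\mathbb P^{N_j}\{V'(\xi^\star_{N_j})>\bar\epsilon,\ s^\star_{N_j}\le j\}\le\beta/(q+1)$, where $s^\star_{N_j}\le\varsigma_{N_j}$ is the complexity of \eqref{eq:SP:c}, as shown in the proof of Proposition~\ref{prop:a-posteriori-robustness}.

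For fixed $j$ we use the standard compression/support-list argument for the non-degenerate convex program \eqref{eq:SP:c}. Its solution is unique and determined by at most $q$ scenarios. Summing over all $k\le j$ and all index sets of size $k$ that can form a support list, the event ``complexity $k$ and risk above $\bar\epsilon$'' has probability at most $\binom{N_j}{k}$ times an integral of $(1-\epsilon)^{N_j-k}$ over $\epsilon>\bar\epsilon$, weighted by the distribution of the risk of the $k$-sample solution. This is exactly the quantity that the defining inequality \eqref{eq:defNj} of $N_j$ controls. The inequality
\begin{equation*}
\binom{N_j}{j}(1-\bar\epsilon)^{N_j-j}\le\beta_j\sum_{m=j}^{\bar M_j}\binom{m}{j}(1-\bar\epsilon)^{m-j}
\end{equation*}
is the specialisation of \eqref{eq:epsNBeta} with $t=1-\bar\epsilon$ and confidence $\beta_j(\bar M_j+1)=\beta/(q+1)$. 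Because the left-hand side is monotone in $k\le j$, we expect it to give $\epsilon_{N_j,\beta/(q+1)}(j)\le\bar\epsilon$ in the truncated sense used in \cite{garatti2023complexity}. Theorem~\ref{th:scenario}, applied with confidence $\beta/(q+1)$ and using that $\epsilon_{N,\beta}(\cdot)$ is non-decreasing, would then give the per-$j$ bound. Summing over $j=0,\dots,q$ yields $\beta$.

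The main obstacle is this last step: checking that \eqref{eq:defNj}, with the upper summation limit $\bar M_j$ rather than $N_j-1$, really implies the needed per-$j$ probability bound. This requires the refined argument of \cite[Theorem~2]{garatti2023complexity}, which uses the a priori bound of Proposition~\ref{prop:a-priori-robustness} at sizes beyond $\bar M_j$ to handle the tail terms. We would take that argument and check that it uses only uniqueness, non-degeneracy and the support-list structure of the convex program. All three hold for \eqref{eq:SP:c}, whose minimal support list is determined by the indices $\mathcal I_N$, so the result transfers, and combining it with Theorem~\ref{th:comp} concludes the proof.
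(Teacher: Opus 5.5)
Your outer structure matches the paper's proof: you reduce to the linear program \eqref{eq:SP:c} via Theorem~\ref{th:comp} and then rely on the incremental result of \cite{garatti2023complexity} for that program. The union bound over the stopping index and the inclusion $\{\varsigma_{N_j}\le j\}\subseteq\{s^\star_{N_j}\le j\}$ are also fine. The per-$j$ step, however, is wrong as you fill it in. The inequality in \eqref{eq:defNj} is not \eqref{eq:epsNBeta} at $t=1-\bar\epsilon$ with confidence $\beta/(q+1)$. That would need the coefficient $\beta/((q+1)N_j)$ and a sum up to $N_j-1$, whereas \eqref{eq:defNj} has $\beta/((q+1)(\bar M_j+1))$ and stops at $\bar M_j$. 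Moreover, \eqref{eq:defNj} does not imply $\epsilon_{N_j,\beta/(q+1)}(j)\le\bar\epsilon$. For $q=100$, $\beta=10^{-6}$, $\bar\epsilon=0.1$, $j=1$ one finds $\bar M_1=132$ and $N_1=231$, while $\frac{\beta}{101\cdot 231}\sum_{m=1}^{230}m\,(0.9)^{m-1}\approx 4.3\cdot10^{-9}<231\,(0.9)^{230}\approx 6.9\cdot 10^{-9}$, i.e., $\epsilon_{231,\beta/(q+1)}(1)>\bar\epsilon$. So Theorem~\ref{th:scenario} cannot give the per-$j$ bound.

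What does give it is a counting identity. For a non-degenerate problem and every sample size $m$, $\sum_k\binom{m}{k}\int_0^1(1-v)^{m-k}\,dF_k(v)=1$, where $F_k$ is the distribution of the risk of the $k$-scenario solution restricted to the event that all $k$ scenarios are of support. Summing this only over $m=0,\dots,\bar M_j$ gives total mass $\bar M_j+1$. Since $N_j>\bar M_j$, the ratio $\binom{N_j}{k}t^{N_j-k}/\sum_{m=k}^{\bar M_j}\binom{m}{k}t^{m-k}$ is increasing in $t$ and in $k$. Hence \eqref{eq:defNj} yields $\binom{N_j}{k}(1-v)^{N_j-k}\mathbf{1}\{v>\bar\epsilon\}\le\beta_j\sum_{m=k}^{\bar M_j}\binom{m}{k}(1-v)^{m-k}$ for all $v\in[0,1]$ and $k\le j$. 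Integrating gives probability at most $\beta_j(\bar M_j+1)=\beta/(q+1)$. No tail terms and no a priori bound are involved; $\bar M_j$ is just a truncation level below $N_j$.

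Second, your transfer step asserts that \eqref{eq:SP:c} is non-degenerate, and this is false for general $\mathbb P$. If some $b_\ell(\delta)$ has an atom, the minimum in \eqref{eq:solXi} is attained by several scenarios with positive probability. None of them is then individually of support, and the support scenarios may fail to reproduce $\xi_N^\star$. Also, $\mathcal I_N$ may even fail to be irreducible, as the paper's footnote on tie-breaks notes. The identity above rests on decomposing the sample space by a unique support set, and that decomposition is then lost. As written, your argument covers only distributions for which such ties have probability zero (e.g.\ each $b_\ell(\delta)$ atomless), whereas the theorem is stated for any $\mathbb P$. The paper addresses exactly this point in its footnote, invoking the validity of the results of \cite{garatti2023complexity} without non-degeneracy. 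You need that extension rather than an appeal to non-degeneracy; your inclusion $\{\varsigma_{N_j}\le j\}\subseteq\{s^\star_{N_j}\le j\}$ already handles the use of an upper bound on the complexity.
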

\begin{proof}
Recall that by Theorem~\ref{th:comp}, the risk of any possibly sub-optimal solution to the non-convex scenario program \eqref{eq:SP} is upper bounded by that of the solution to the scenario program \eqref{eq:SP:c} computed with the same data-set. This means that 
\[
\mathbb P^{N_q} \{ V(x^\star) > \bar \epsilon \}
\le \mathbb P^{N_q} \{ V'(\xi^\star) > \bar \epsilon \}
\]
where $\xi^\star$ denotes the solution of \eqref{eq:SP:c} with the constraints corresponding to the $N_j$ scenarios available at termination of Algorithm \ref{algo}. The result then follows directly from \cite[Theorem 1]{garatti2023complexity}, since $\xi^\star$ coincides with the solution returned when \cite[Algorithm 1]{garatti2023complexity} is used with the convex scenario program \eqref{eq:SP:c}.\footnote{Note that \cite[Algorithm~1]{garatti2023complexity} is stated in terms of the  actual complexity computed at each iteration and for so-called non-degenerate problems (in this case, the complexity coincides with the number of support scenarios mentioned in \cite{garatti2023complexity}); however, the results of \cite{garatti2023complexity} remain straightforwardly valid without non-degeneracy and when an upper bound on the complexity is used.}

\end{proof}

\section{Application to the unit commitment problem}\label{sec:UCP}

\subsection{The unit commitment problem as a scenario program}

Unit commitment is a fundamental problem to be addressed for the operation of the electricity grid. It consists in deciding which power generators to turn on or off over time so as to meet the forecasted electricity demand while minimizing  costs and satisfying system constraints,  \cite{padhy2004unit,zheng2014stochastic}. 
In this paper, we adopt the  single bus approximation, thus ignoring the grid model and the related transmission line limits.  We focus on the case when thermal generators (including gas turbines in simple and combined cycle units, cogeneration plants and nuclear reactors) are to be scheduled over a one-day horizon discretized in $T = 24$ time slots of one-hour each.  Thermal generators are characterized by operational constraints in terms of minimum up-time and down-time, 
ramping limitations, and prohibited intervals of power production, \cite{UC-book-2014}. 

Given $n_{\mathrm{p}}$ thermal Generation Units (GUs) and  a data-set of $N$ demand profiles $\delta_i= P^{\mathrm{d}}_{i} = [P^{\mathrm{d}}_{i,0} \, \cdots \, P^{\mathrm{d}}_{i,T-1}]^\top \in \mathbb R^T$, $i=1,\dots,N$, we next formulate the unit commitment problem  as a additively-uncertain  scenario  optimization problem where the fuel and operating costs are minimized subject to the operating constraints of each single generator and a coupling constraints related to the robust-over-the-data satisfaction of the demand. 

Not surprisingly, the resulting problem is non-convex since we are not only optimizing the power production $P_{j,t}$ per time slot $t$ of each GU $j$ but also defining its commitment status  which is naturally modeled via a binary variable $Y_{j,t}$. The fact that  GU $j$ has  prohibited intervals of productions calls for the introduction of additional binary variables. Specifically, suppose that $P_{j,t}$ can take values in a disconnected interval which is the union of $Z_j\ge 1 $ non-overlapping operating zones $ [\underline{P}_{j,z}, \overline{P}_{j,z}]$, $z=1,\dots, Z_j$. 
Then, the fact that $P_{j,t}$ either belongs to one such zone interval or is zero can be modeled through $Z_j$ binary variables $y_{j,z,t}$, each one indicating if GU $j$ is operating in the corresponding zone $ [\underline{P}_{j,z}, \overline{P}_{j,z}]$ in slot $t$, and  whose sum over the operating zone index $z$ provides the commitment status variable $Y_{j,t}$.  The variation of the power production $P_{j,t}$  within a zone interval or between different zone intervals is subject to rate constraints.
Further binary variables $u_{j,t}$ and $d_{j,t}$ are introduced in the problem formulation to account for the minimum up-time $T_j^{\mathrm u}$ and down-time $T_j^{\mathrm d}$, i.e., the minimum number of time slots that the GU $j$ has to stay on/off when it has been switched on/off, that are typically much smaller than $T$. Variable $u_{j,t}$ ($d_{j,t}$) is set to 1 if and only if the GU $j$ is switched on (off) at time slot $t$, and from that $t$ the count starts and the commitment status variable $Y_{j,t}$ has to stay equal to 1 (0) for at least  $T_j^{\mathrm{u}}$ ($T_j^{\mathrm{d}}$) time slots, including $t$.    
The binary variables $Y_{j,t}$, $u_{j,t}$, and $d_{j,t}$ respectively allow also to account for operating costs related to the commitment, activation and deactivation of GU $j$ at time $t$. These costs are added to the fuel cost represented by a quadratic function of the power $P_{j,t}$ provided by the GU $j$ at time $t$. 

The resulting mixed-integer unit commitment scenario program is given by: 
\begin{subequations}\label{eq:UC}
    \allowdisplaybreaks
    \begin{align}
        \min_{ \substack{P_{j,t}, y_{j,z,t} \\ u_{j,t}, d_{j,t} } }
        &\sum_{t = 0}^{T-1} \sum_{j = 1}^{n_{\mathrm{p}}} 
        \left(a_j P_{j,t}^2 + b_j P_{j,t} + c_j Y_{j,t}  +c_j^{\mathrm{u}} u_{j,t} + c_j^{\mathrm{d}} d_{j,t} \right) \nonumber 
        \\
        \,\, \text{s.t.} \,
        &\sum_{j = 1}^{n_{\mathrm{p}}} P_{j,t} \geq P^{\mathrm{d}}_{i,t} \,, \quad i = 1,\dots,N \label{eq:UC:demand}\\
        & 
        -\underline{\Delta}^{\mathrm{p}}_j \leq P_{j,t} - P_{j,(t-1)\,\text{mod}\,T} \leq \overline{\Delta}^{\mathrm{p}}_j \label{eq:UC:ramp}\\
        & \sum_{z = 1}^{Z_j} y_{j,z,t}\underline{P}_{j,z} \leq P_{j,t} \leq \sum_{z = 1}^{Z_j} y_{j,z,t}\overline{P}_{j,z}  \label{eq:UC:maxMinP}\\
        & Y_{j,t} = \sum_{z = 1}^{Z_j} y_{j,z,t} \label{eq:UC:ON/OFF}\\
        & Y_{j,t} \leq 1 \label{eq:UC:onlyOne} \\
        & Y_{j,t} - Y_{j,(t-1)\,\text{mod}\,T} \leq u_{j,t} \leq Y_{j,t} \label{eq:UC:swON}\\
        & u_{j,t}\le 1-Y_{j, (t-1) \,\text{mod}\,T}\label{eq:UC:swON-bis}\\
        & Y_{j,\tau\,\text{mod}\, T} \geq     u_{j,t}, \quad \tau \in [t,t+T^{\mathrm{u}}_j-1] \label{eq:UC:uptime}\\
        & Y_{j,(t-1)\,\text{mod}\,T} - Y_{j,t} \leq d_{j,t} \leq Y_{j,(t-1)\,\text{mod}\,T} \label{eq:UC:swOFF}\\
        & d_{j,t}\le 1-Y_{j,t}\label{eq:UC:swOFF-bis}\\
        & Y_{j,\tau \,\text{mod}\,T} \leq 1 - d_{j,t}, \quad \tau \in [t,t+T^{\mathrm{d}}_j-1] \label{eq:UC:dwtime}\\
        & y_{j,z,t}, u_{j,t}, d_{j,t} \in\{0,1\},\, P_{j,t}\in \mathbb R \\
        & j=1,\dots, n_{\mathrm{p}}, \, \, t = 0,1,\dots,T-1. \nonumber
\end{align}
\end{subequations}

Note that the cost function to be minimized is composed of the fuel cost per time slot of each unit given by the first three terms in the double summation, \cite{ZivicDjurovic2012},  and the startup and shutdown costs. 
In fact, in the startup phase, a thermal unit consumes a fixed amount of fuel to reach the desired temperature and pressure, \cite{KirschenStrbac2004}, whereas it wastes fuel during the shutdown procedure, \cite{BaldwinDaleDittrich1959}. Decision variables and parameters are summarized in Table~\ref{tab:UC:vars}.  

The problem is formulated so as to provide a one-day solution that could be applied over multiple consecutive days, in the presence of a stationary demand profile. This is achieved by imposing that when the time index exceeds the extremes of the interval $[0,T-1]$ it is reset  to an index in the previous day (if smaller than 0) or in the following day (if larger than $T-1$)  such that the distance from the extremes is preserved. This is imposed by evaluating the time index modulo $T$, so that, e.g., $(-1) \,\text{mod}\,T=T-1$ and $(T+k) \,\text{mod}\,T=k$ for $k=0,\dots,T-1$, as shown in the ramp constraint \eqref{eq:UC:ramp}, the switch on conditions \eqref{eq:UC:swON} and \eqref{eq:UC:swON-bis}, and the switch off condition \eqref{eq:UC:swOFF} when evaluated at $t=0$, and in the minimum on-time and off-time constraints  \eqref{eq:UC:uptime} and \eqref{eq:UC:dwtime}, when evaluated towards the end of the one-day time horizon.         

The $q=T$ constraints in \eqref{eq:UC:demand}, which enforce demand satisfaction, are those in which scenarios show up and exhibit an additively-uncertain structure. Hence,  the unit commitment scenario problem \eqref{eq:UC} is of the form \eqref{eq:SP}, with the decision variable $x$ collecting all scalar variables $y_{j,z,t}$, $P_{j,t}$, $u_{j,t}$, $d_{j,t}$, $z=1,\dots, Z_j$, $j=1,\dots,n_{\mathrm{p}}$, $t=0,\dots, T-1 $, and all the constraints other than \eqref{eq:UC:demand} defining the set $\mathcal X$ where $x$ takes values. We can then validate our results using real-world hourly power demand data, obtained from the Spanish Transmission System Operator, Red El\'ectrica \cite{data2025redea}, covering the power demand from peninsular Spain between January 1st 2014 and December 31st 2024, for a total of $2870$ weekdays. 
Data are plotted in Figure~\ref{fig:Pd}.
Power is measured in GW, time in hours (h), cost in euros.

\begin{figure*}[t]
    \centering
    \begin{subfigure}[b]{0.24\linewidth}
        \includegraphics[width=\textwidth]{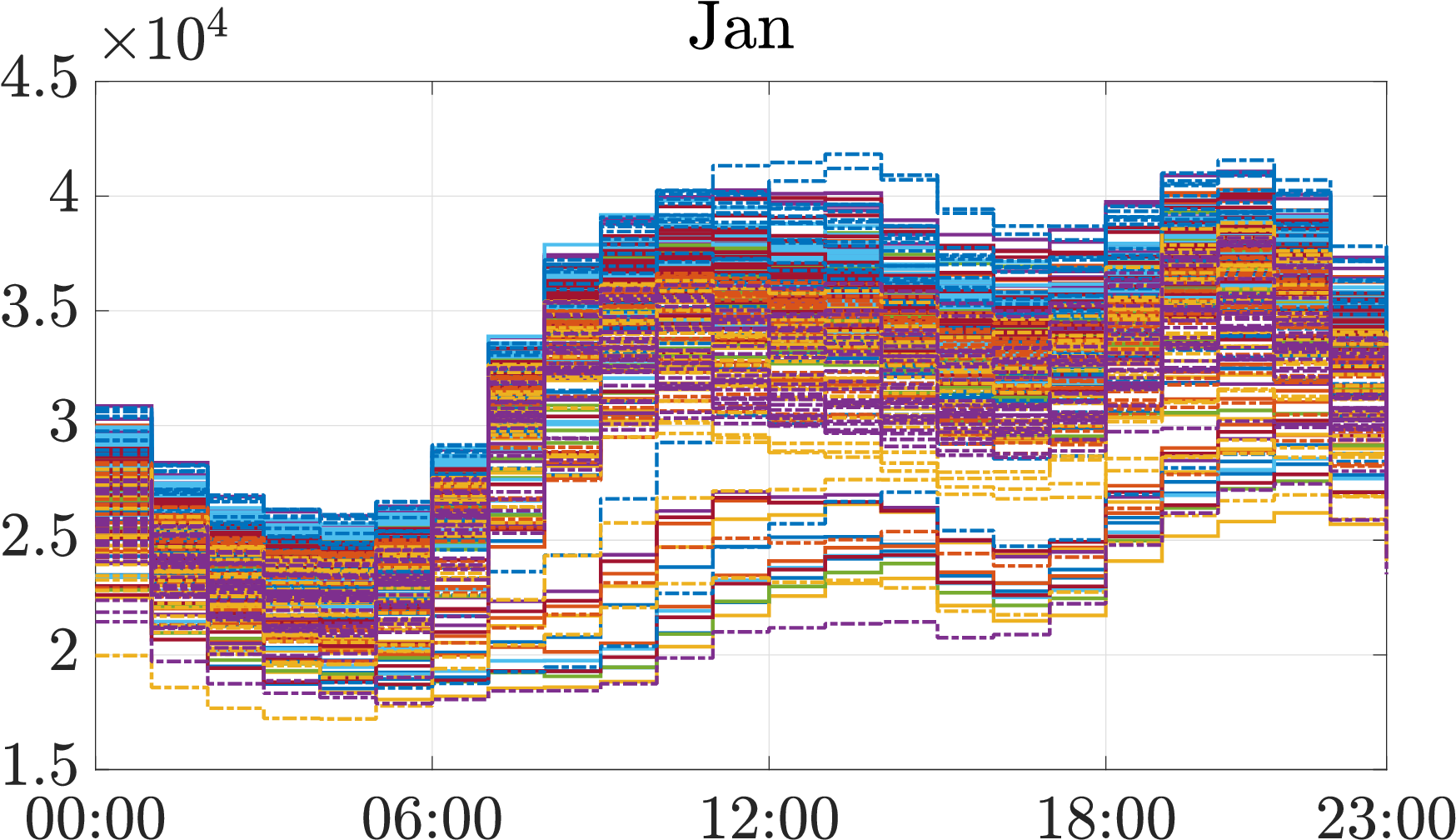}
    \end{subfigure}
    \begin{subfigure}[b]{0.24\linewidth}
        \includegraphics[width=\textwidth]{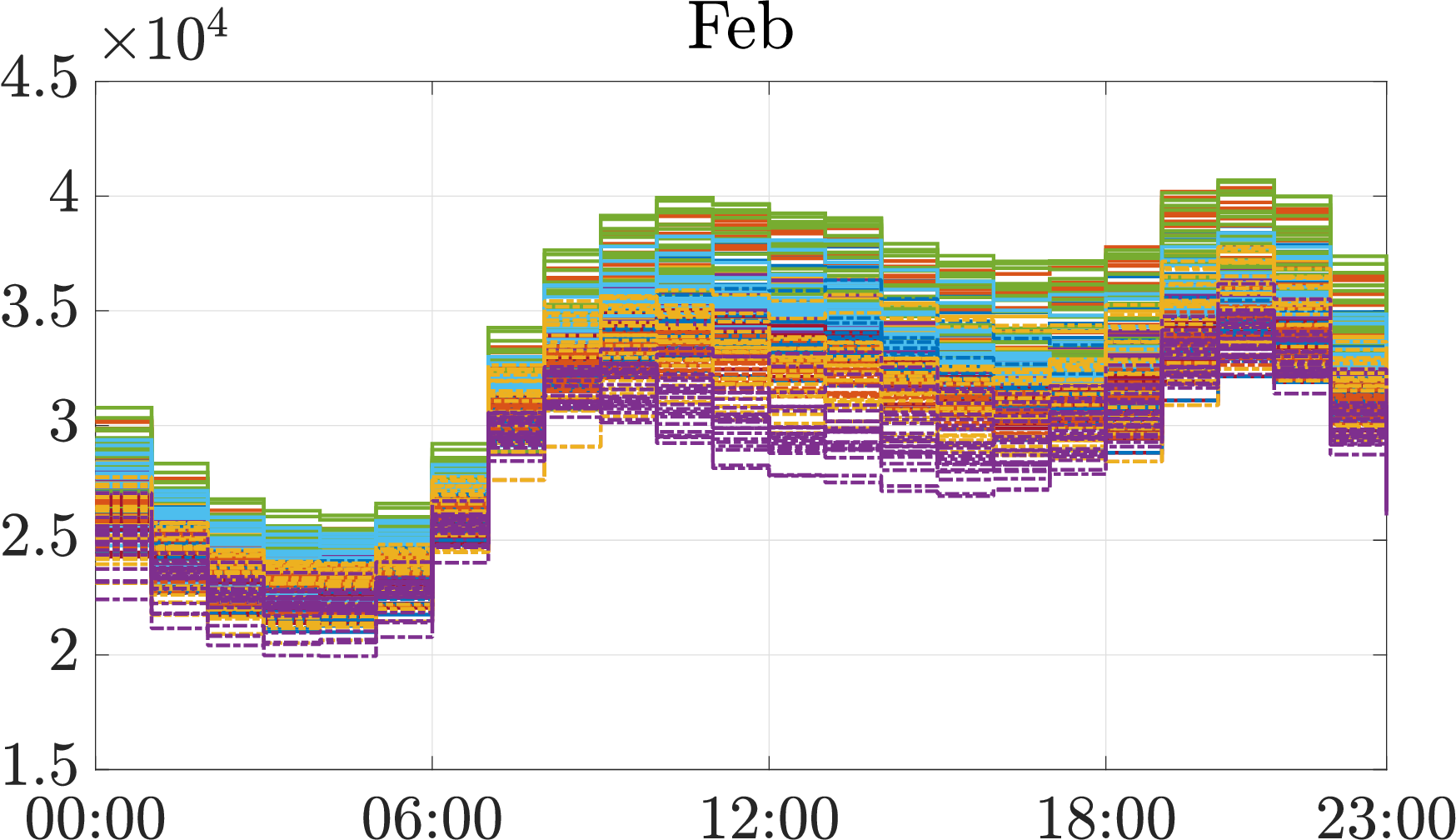}
    \end{subfigure}
    \begin{subfigure}[b]{0.24\linewidth}
        \includegraphics[width=\textwidth]{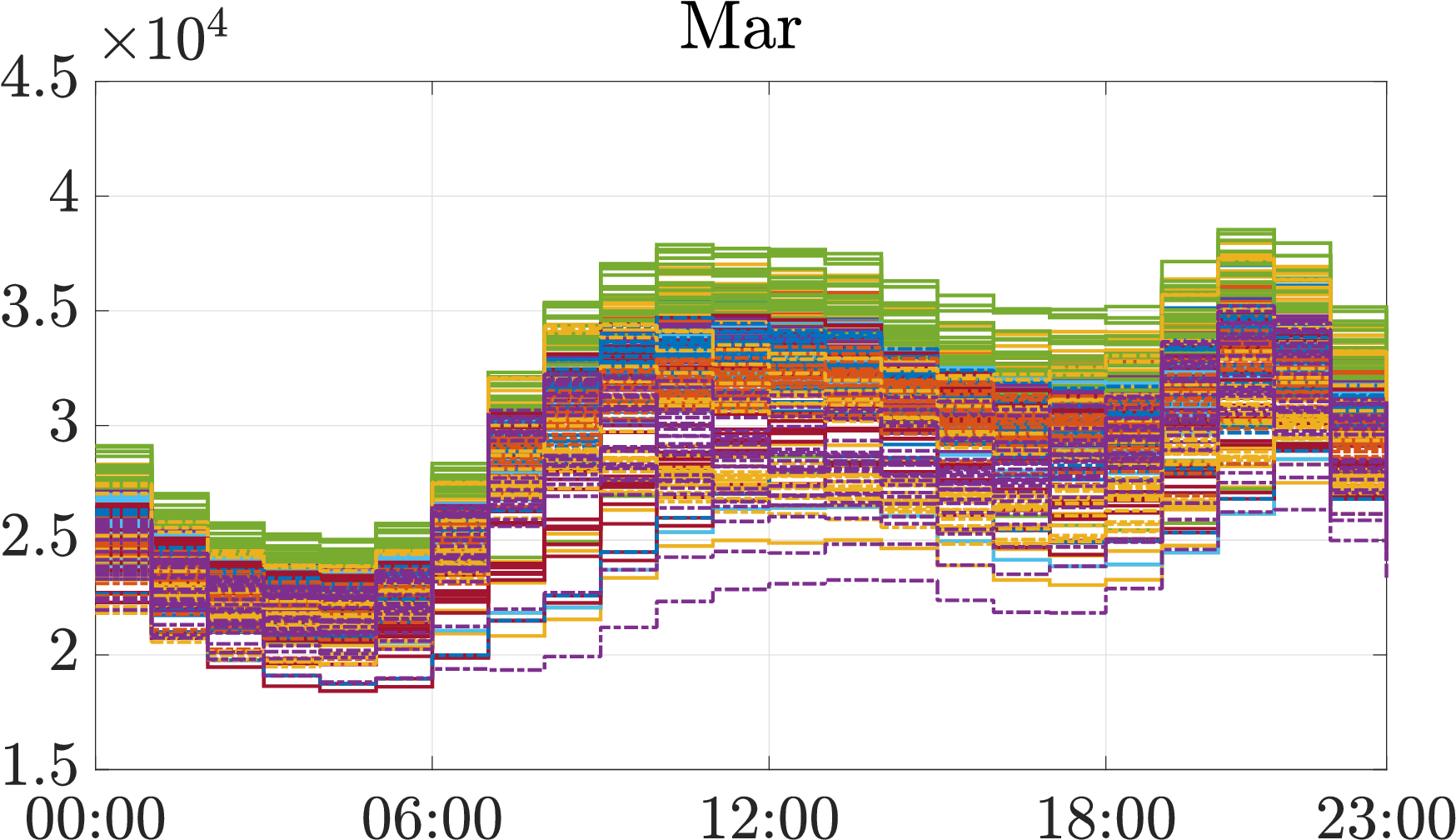}
    \end{subfigure}
    \begin{subfigure}[b]{0.24\linewidth}
        \includegraphics[width=\textwidth]{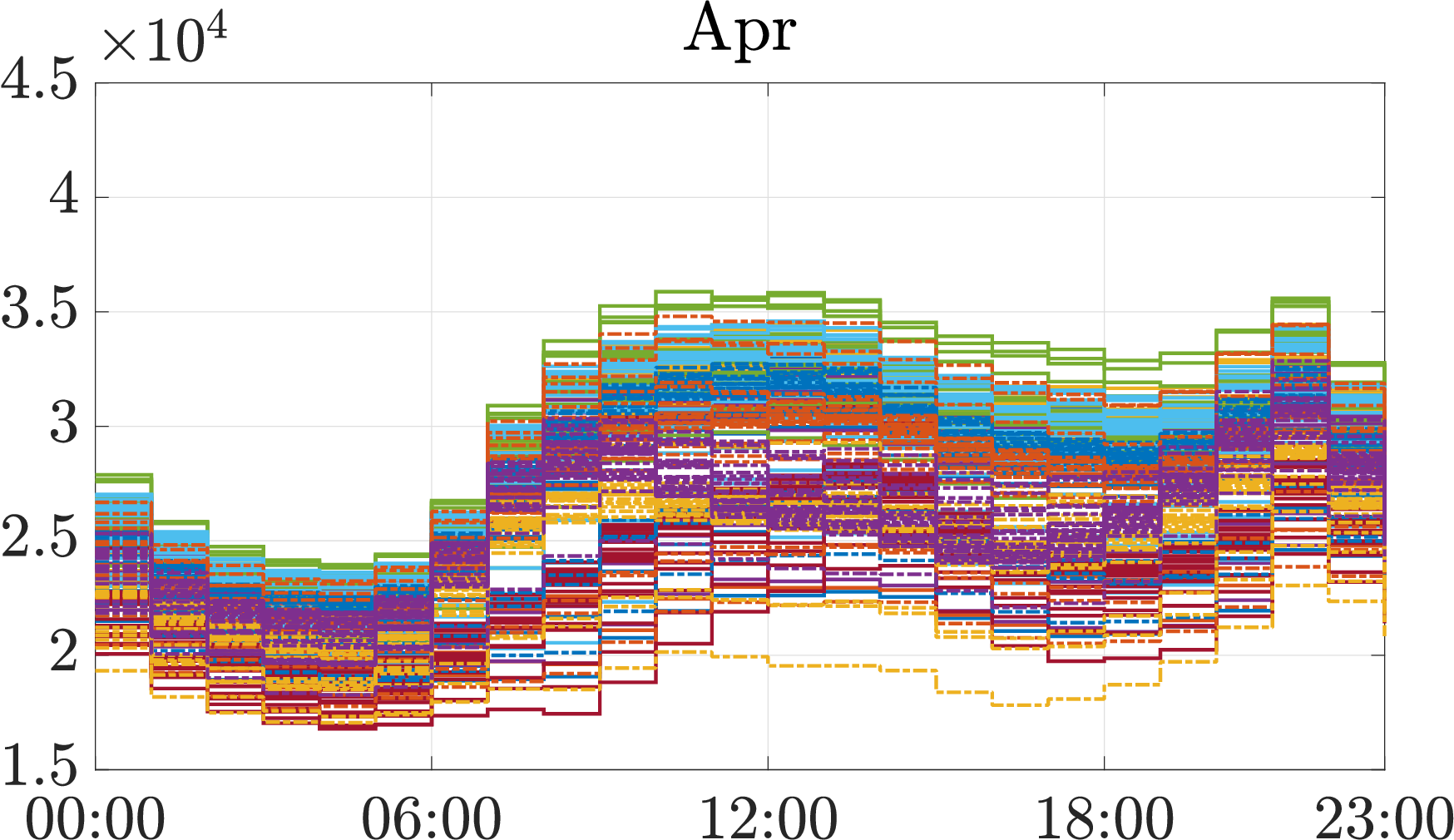}
    \end{subfigure}
        \\
        \vspace{.2cm}
    \begin{subfigure}[b]{0.24\linewidth}
        \includegraphics[width=\textwidth]{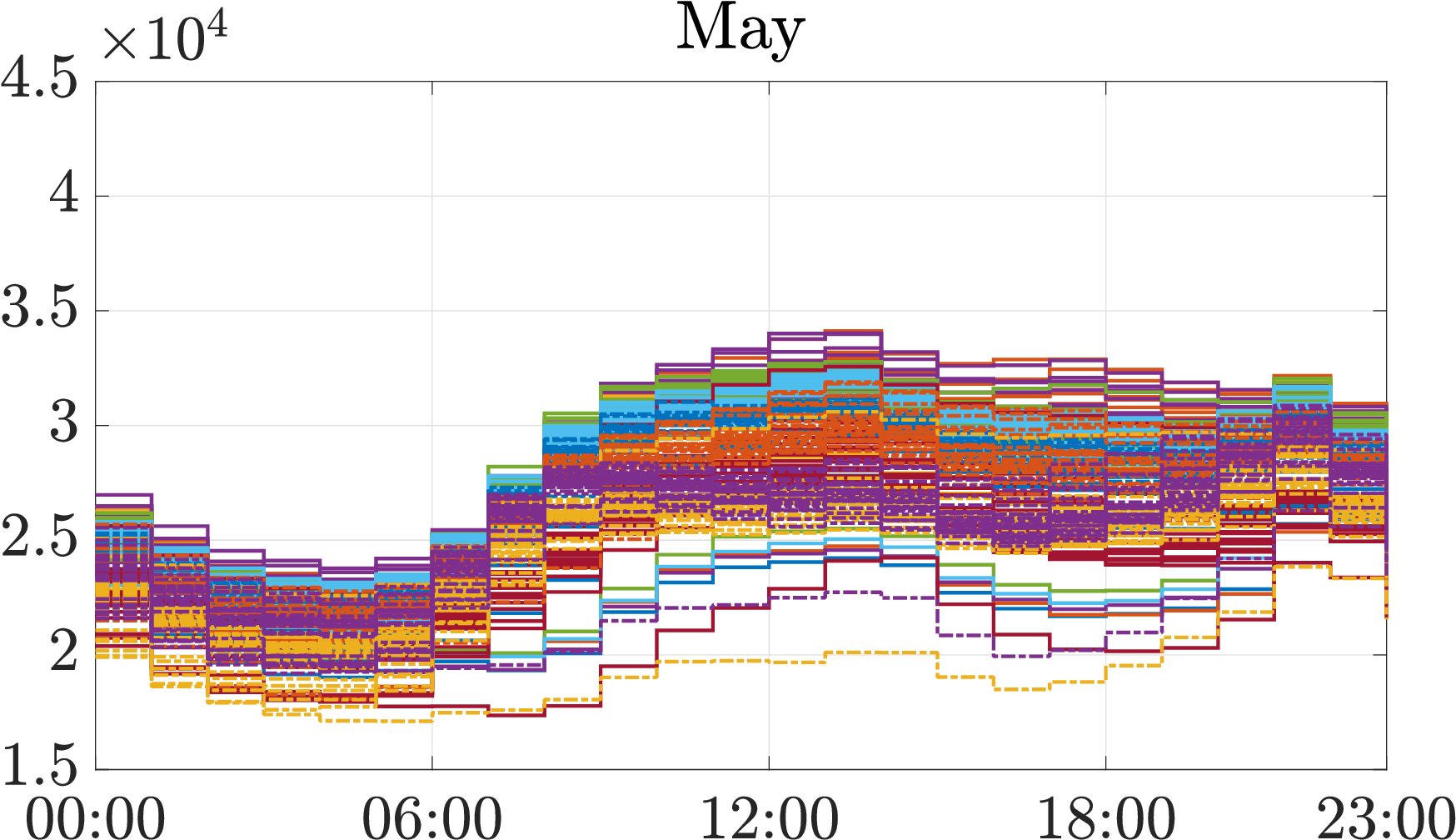}
    \end{subfigure}
    \begin{subfigure}[b]{0.24\linewidth}
        \includegraphics[width=\textwidth]{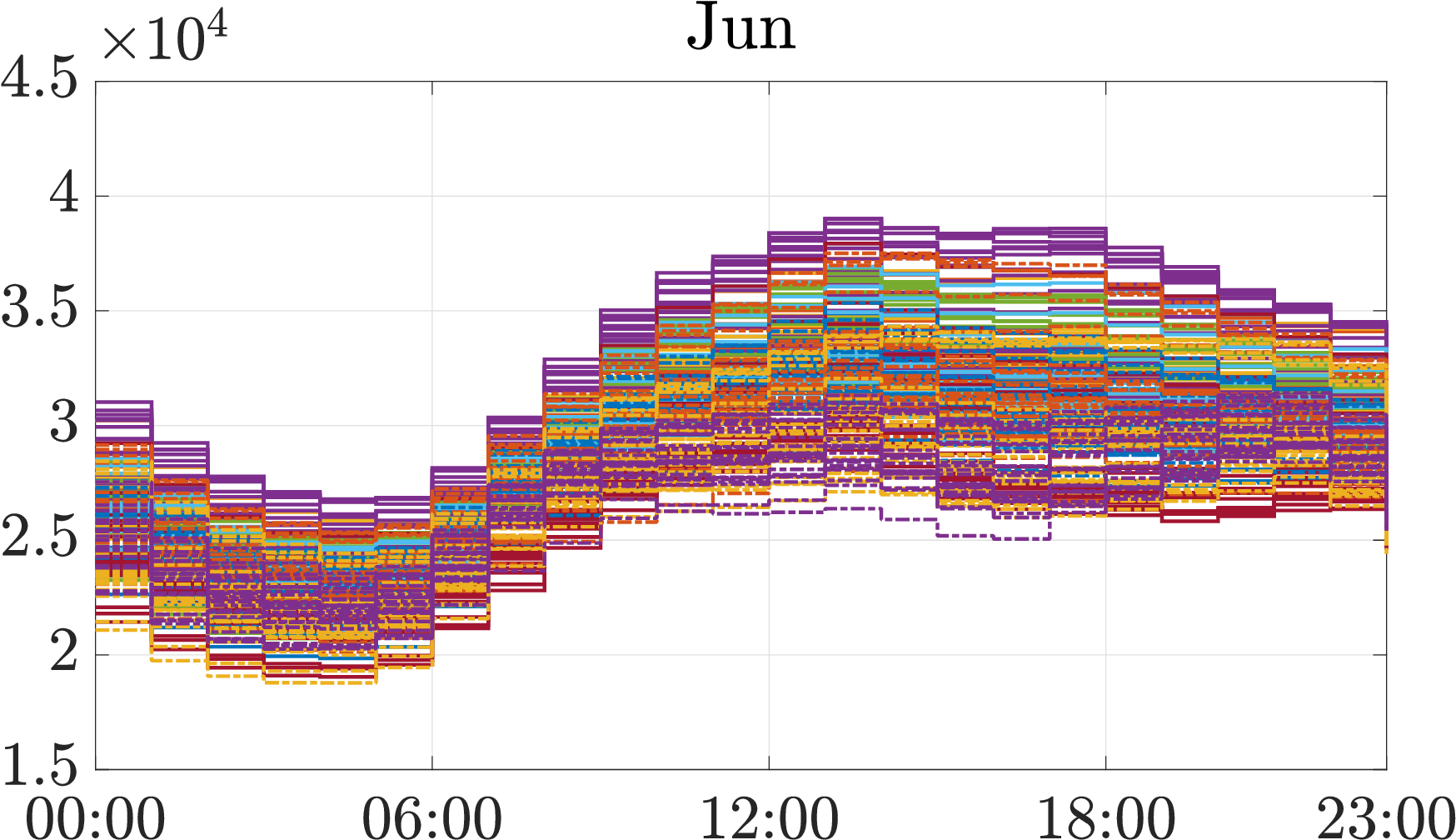}
    \end{subfigure}
    \begin{subfigure}[b]{0.24\linewidth}
        \includegraphics[width=\textwidth]{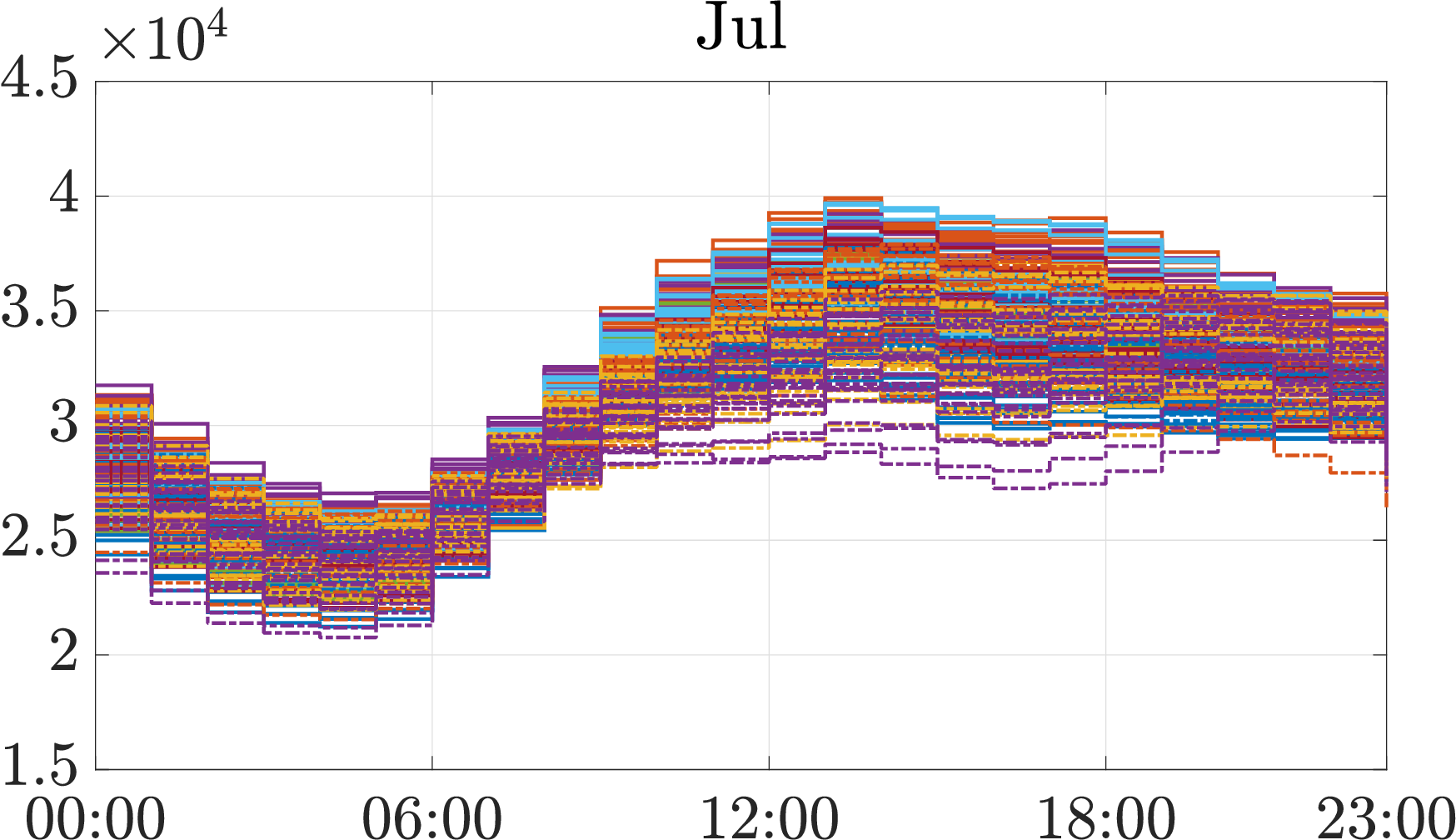}
    \end{subfigure}
    \begin{subfigure}[b]{0.24\linewidth}
        \includegraphics[width=\textwidth]{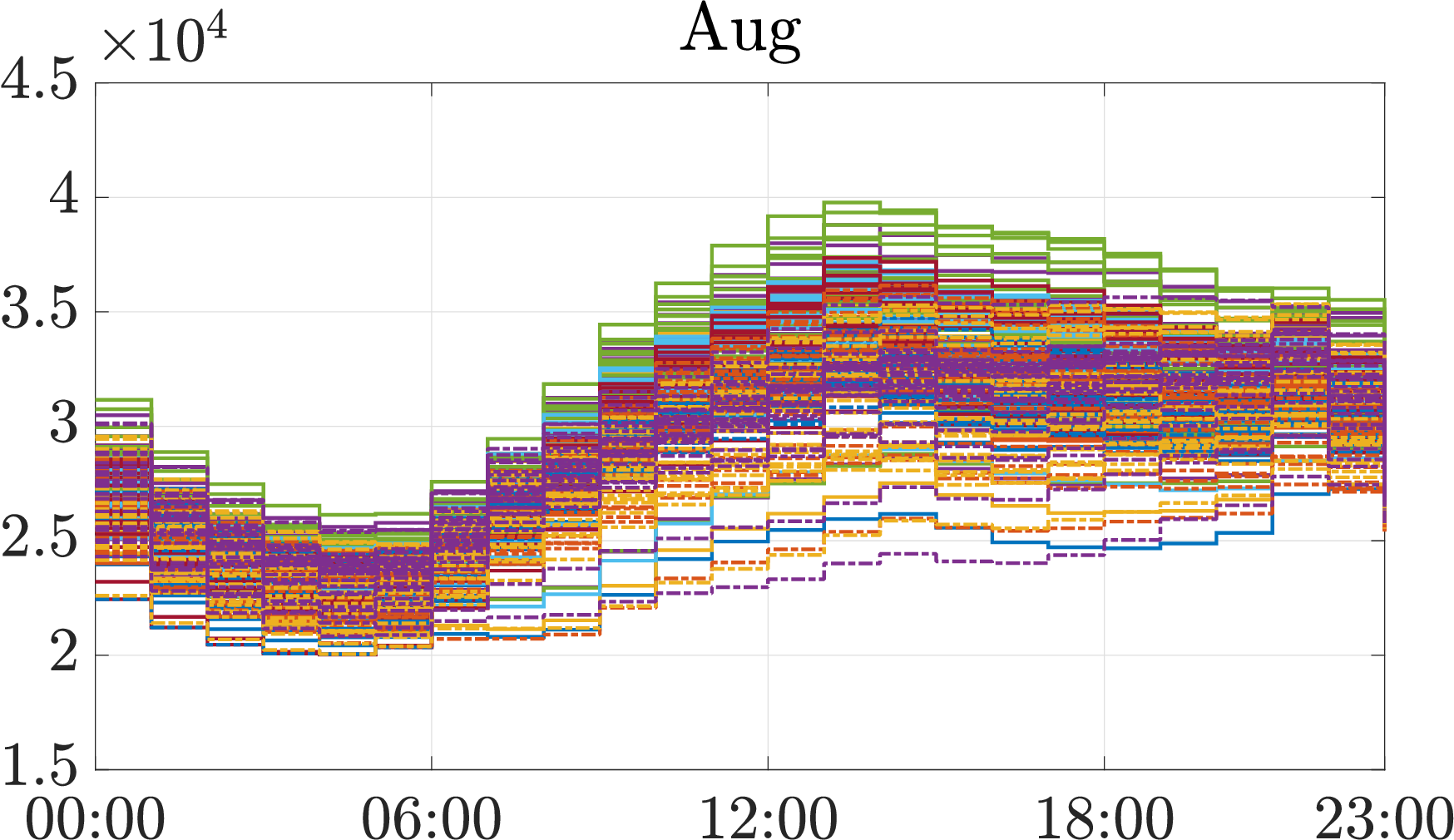}
    \end{subfigure}
        \\
        \vspace{.2cm}
    \begin{subfigure}[b]{0.24\linewidth}
        \includegraphics[width=\textwidth]{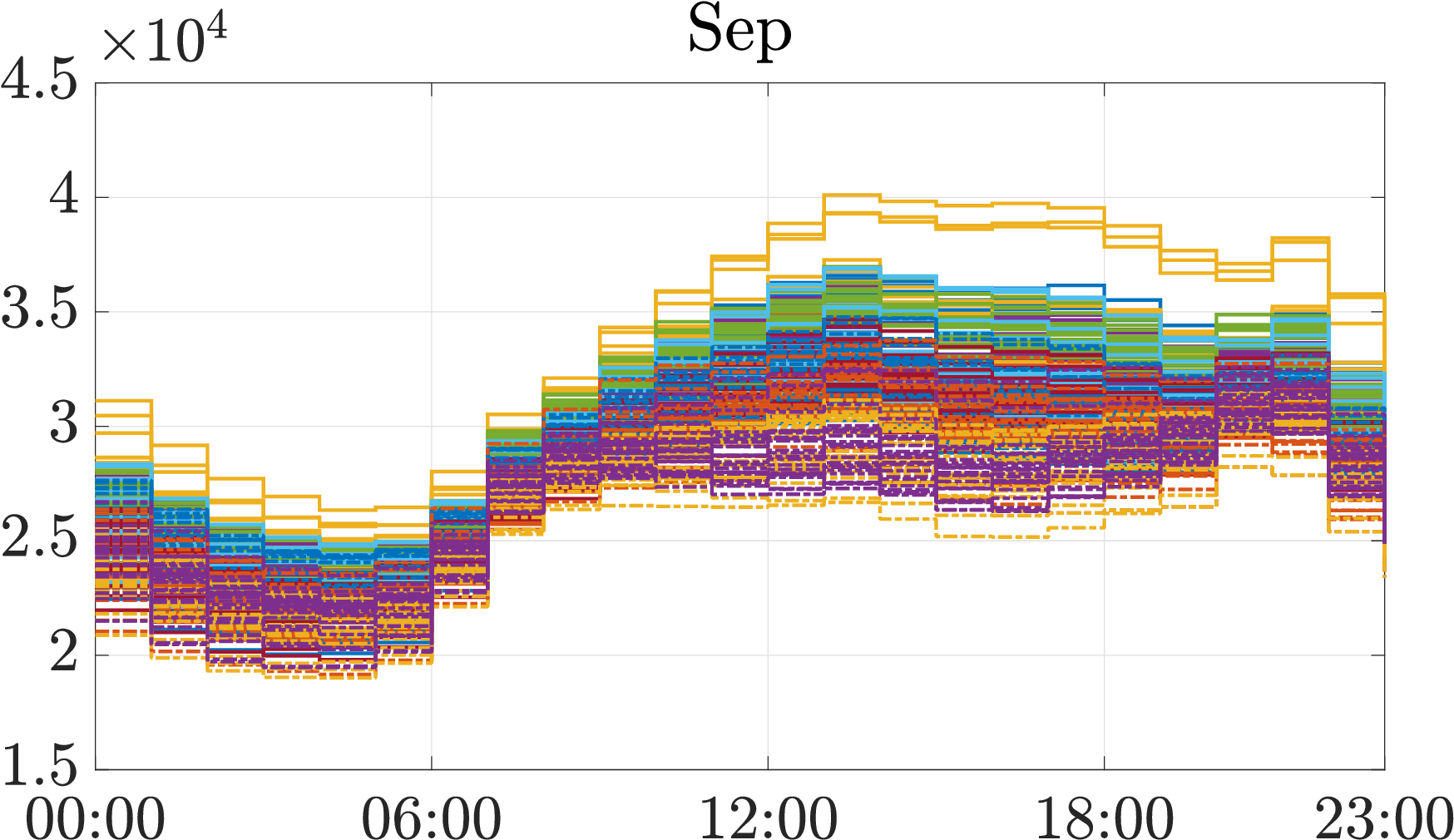}
    \end{subfigure}
    \begin{subfigure}[b]{0.24\linewidth}
        \includegraphics[width=\textwidth]{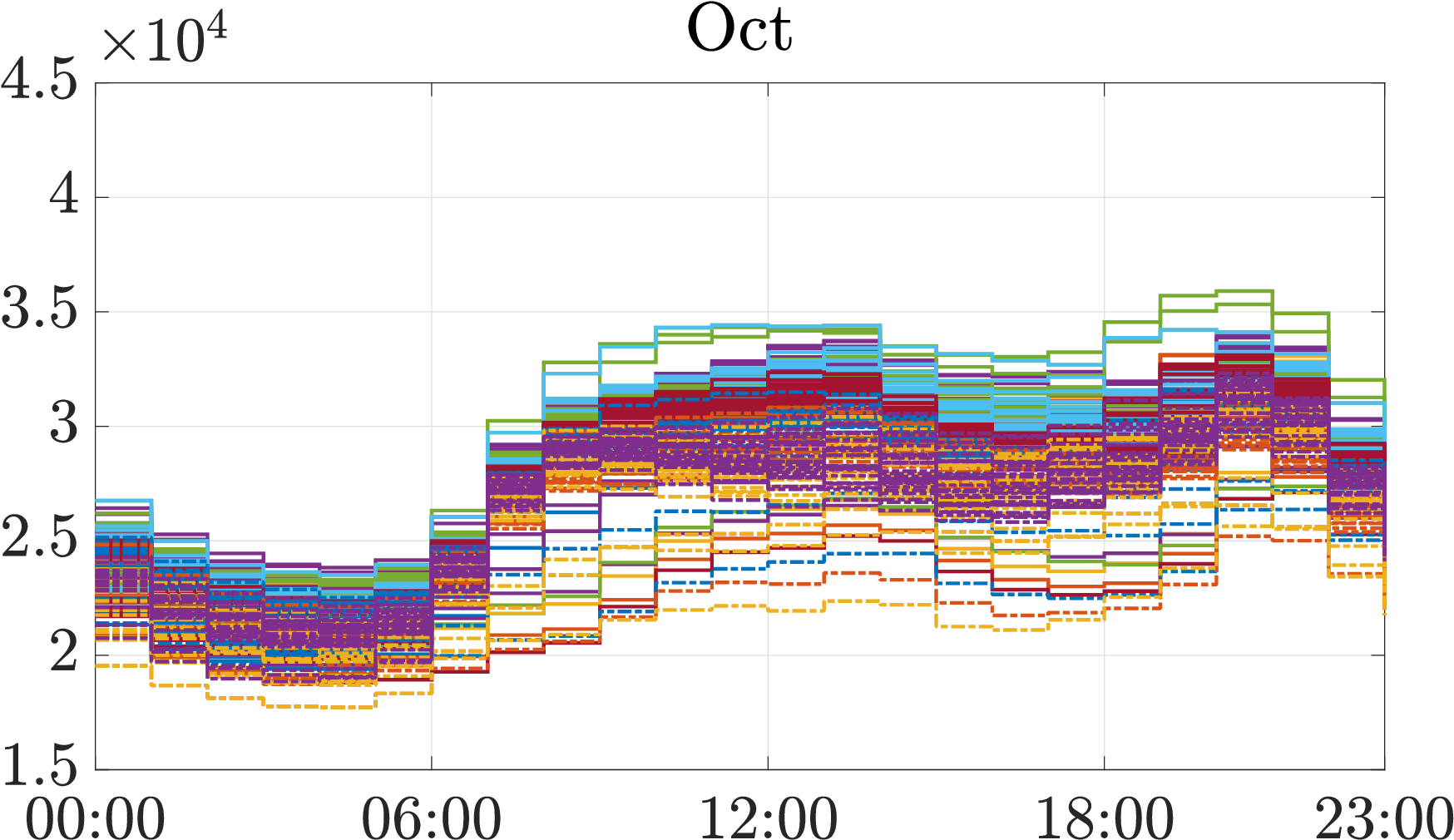}
    \end{subfigure}
    \begin{subfigure}[b]{0.24\linewidth}
        \includegraphics[width=\textwidth]{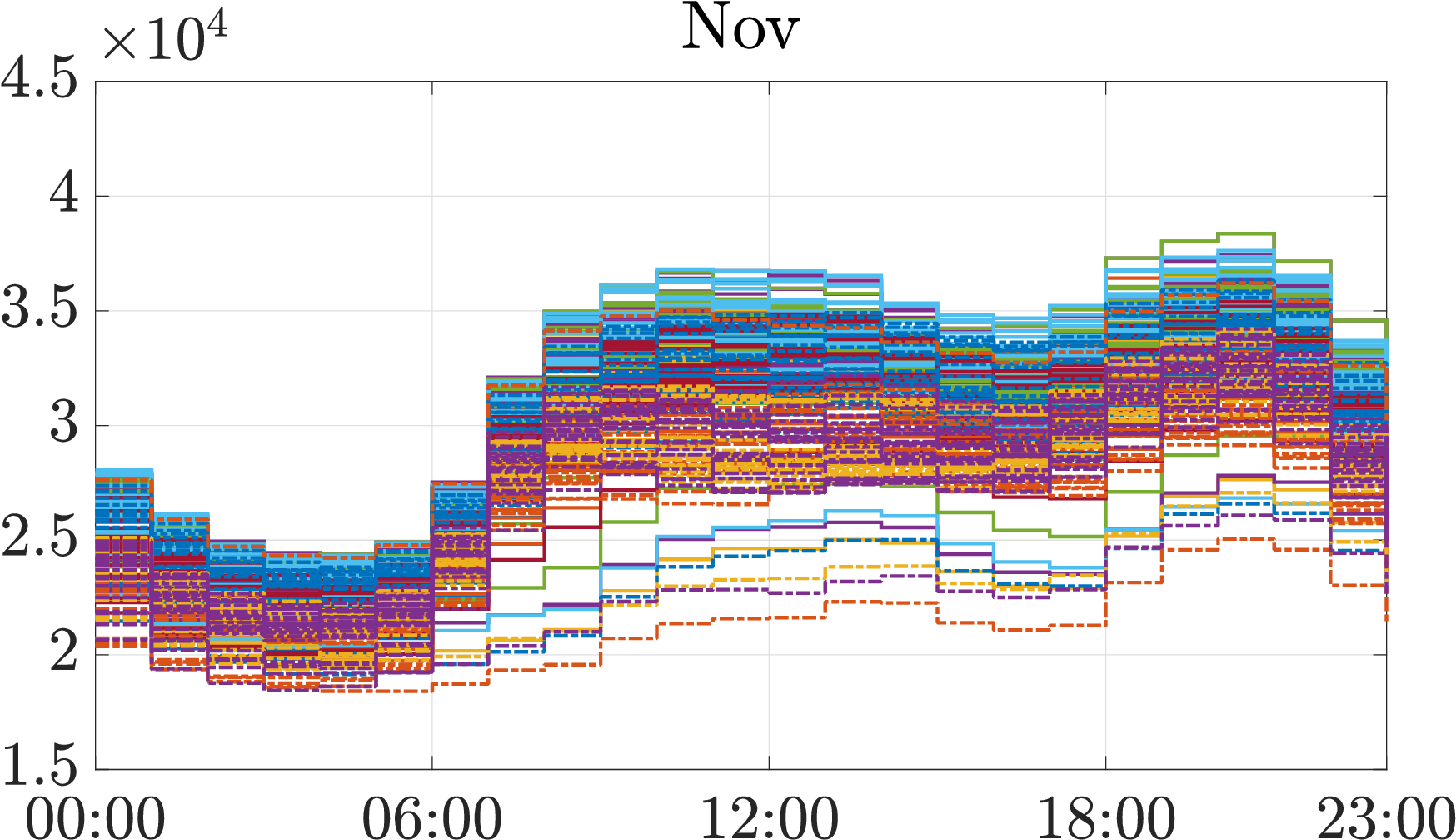}
    \end{subfigure}
    \begin{subfigure}[b]{0.24\linewidth}
        \includegraphics[width=\textwidth]{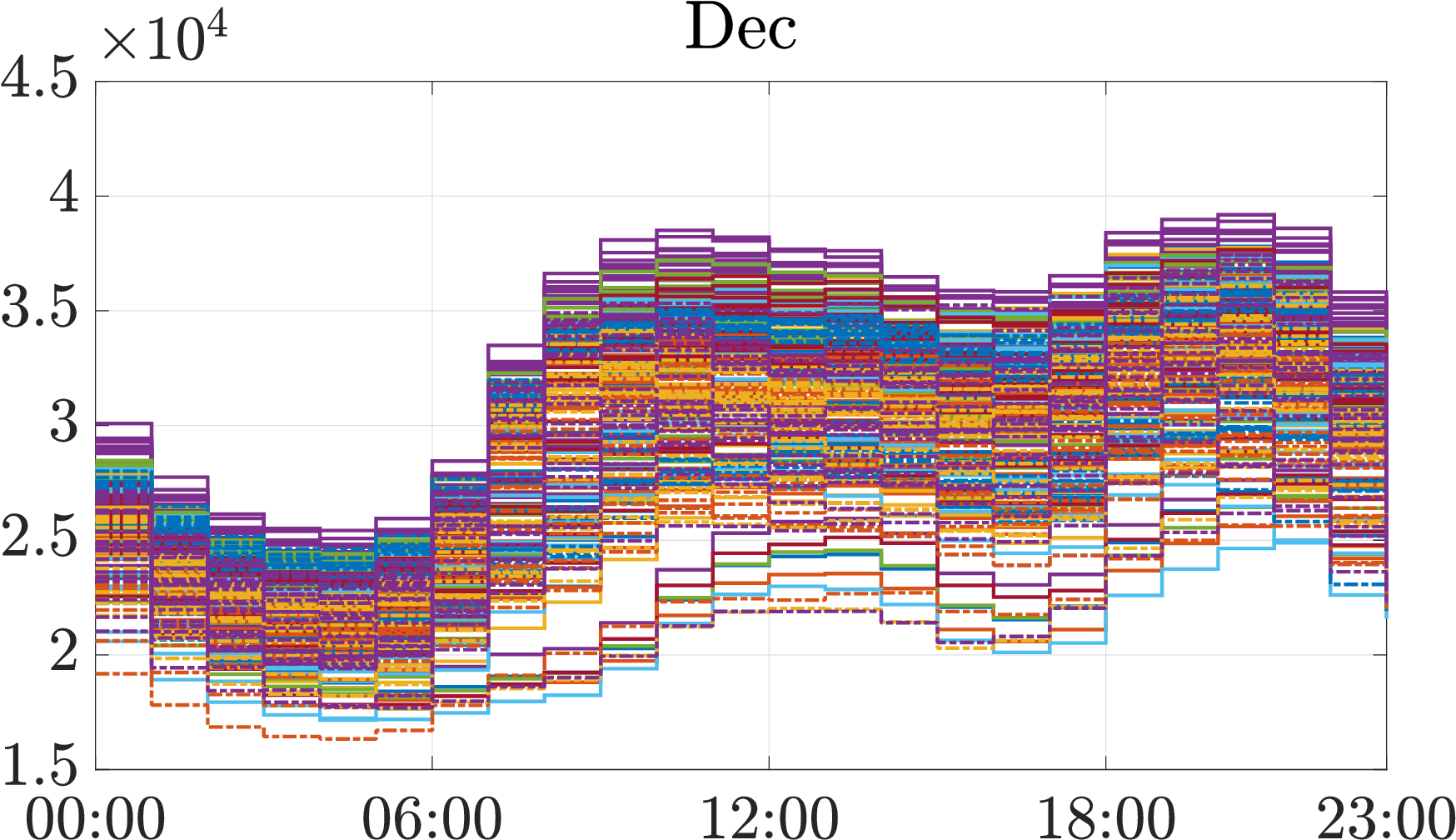}
    \end{subfigure}
        \\
        \vspace{.2cm}
    \begin{subfigure}[b]{\linewidth}
        \centering
        \includegraphics[width=0.75\textwidth]{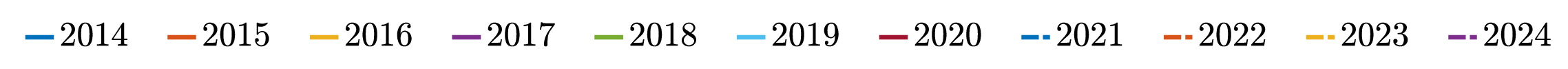}
    \end{subfigure}
    \caption{Weekday power demand of peninsular Spain between January 1st 2014 and December 31st 2024, obtained from Red El\'ectrica.
    }
    \label{fig:Pd}
\end{figure*}

\begin{table}[t]
    \centering
    \caption{Decision variables and parameters in the unit commitment scenario program \eqref{eq:UC} for $j=1,\dots,n_\mathrm{p}$, $t=0,\dots,T-1$, and $i=1,\dots,N$.}
    \setlength{\tabcolsep}{6pt}        
    \renewcommand{\arraystretch}{1.4}  
    \begin{tabular}{cc|l}
        \hline
        Decision variables 
        & {Domain}       & {Description} \\
        \hline
        $P_{j,t}$ & $\mathbb{R}_+$ & Power generated at time $t$  \\
        $Y_{j,t}$ & $\{0,1\}$      & ON/OFF status at time $t$ (auxiliary)\\
        $u_{j,t}$ & $\{0,1\}$      & Startup at time $t$ \\
        $d_{j,t}$ & $\{0,1\}$      & Shutdown at time $t$ \\
        $y_{j,z,t}$& $\{0,1\}$      & Production at time $t$ in 
        $[\underline{P}_{j,z},\overline{P}_{j,z}]$ \\[2pt]
        \hline
        GU parameters 
        & {Domain}       & {Description} \\
        \hline
        $(a_j,b_j,c_j)$ & $\mathbb{R}^3_+$ & Fuel cost coefficients\\
        $(c_j^u, c_j^d)$ & $\mathbb{R}^2_+$ & Startup and shutdown costs \\
        $(\underline{\Delta}_j^{\mathrm{p}},\overline{\Delta}_j^{\mathrm{p}})$ & $\mathbb{R}_+^2$ & Ramp limits on power \\
        $(T_j^{\mathrm{u}},T_j^{\mathrm{d}})$ &  $\mathbb{N}^2$ & Minimum on-time and off-time  \\
        $Z_j$ &  $\mathbb{N}$ & Number of operating zones\\
        $(\underline{P}_{j,z},\overline{P}_{j,z})$ & $\mathbb{R}^2_+$ & Extremes of the $z$-th operating region \\
        \hline
        Scenario parameters  
        & {Domain}       & {Description} \\
        \hline
        $P_{t,i}^{\mathrm{d}}$ & $\mathbb{R}_+$ & Power demand at $t$ in scenario $i$
        \\
    \end{tabular}
    \label{tab:UC:vars}
\end{table}

\subsection{Case study setup}

We next show the results obtained in a case study with $n_{\mathrm{p}} = 4$ GUs with parameters as in Table~\ref{tab:UC:params4}. 

\begin{table}[t]
    \setlength{\tabcolsep}{6pt}         
    \renewcommand{\arraystretch}{1.4}  
    \centering
    \caption{Case study parameters}
    \begin{tabular}{c|c|c|c|c}
        GU $j$ & 1 & 2 & 3 & 4 
        \\\hline
$a_j,b_j,c_j$ & $1,0.4,0.3$   & $0.3,2,0.2$ & $0.4,1,1$ & $10,0.1,0.1$ \\
$c_j^u,c_j^d$ & $0.9,0.4$ & $0.5,0.4$ & $0.2,0.3$ & $1, 0.8$\\
$\underline{\Delta}_j^p,\overline{\Delta}_j^p$ & $7,7$ & $2,0.2$ & $5,5$ & $1.5,1$ \\
$T_j^{\mathrm{u}},T_j^{\mathrm{d}}$ & $3,3$ & $2,1$ &$1,3$ & $1,4$  \\
$Z_j$ & 2 & 2 & 3 & 1\\   $\underline{P}_{j1},\overline{P}_{j1}$ &  $7,13.5$  & $1,3$ & $3,4$ & $1,13$ \\
$\underline{P}_{j2},\overline{P}_{j2}$ &  $13.8,14.5$  & $3.2,14.5$ & $8,9$  & - \\
$\underline{P}_{j3},\overline{P}_{j3}$ &  - & - & $13,14$ & - \\
    \end{tabular}
    \label{tab:UC:params4}
\end{table}

The unit commitment problem \eqref{eq:UC} has $d = 480$ decision variables, of which $96$ continuous and $384$ binary. 
The dimensionality of the program allows for its exact solution and, hence, for a comparison of the guarantees obtained with our method with those of the standard scenario theory revised in Section~\ref{sec:recap}, which applies to the optimal solution of the scenario program. 

In our analysis, we first consider the \textit{a posteriori} robustness certificate in Proposition~\ref{prop:a-posteriori-robustness} and compare it with the \textit{a posteriori} robustness guarantees obtained via Theorem~\ref{th:scenario} by determining a more accurate value of the complexity $s_{N}^{\star}$, which is a computationally expensive process. Therefore, the results are also compared in terms of computation time. As for the computation of $s_{N}^{\star}$, we adopt the greedy algorithm in \cite{campi2018general}, whereby the list of scenarios indexed by $\ell = 1,\dots, {N}$ is progressively reduced to a support list. Precisely, starting from $\ell=1$, $\delta_{\ell}$ is temporarily removed from the currently available scenarios. If the solution obtained by considering the constraints associated with the remaining scenarios coincides with the solution with all $N$ scenarios constraints in place, then $\delta_{\ell}$ is permanently discarded; otherwise, it is reinstated. Then, the procedure moves to the next $\ell$, and continues until $\ell = N$. The length of the resulting sub-list, which is irreducible by construction, but non necessarily minimal, is taken as value of the complexity $s_{N}^{\star}$.

We then consider data-set sizing for guaranteeing a certain desired bound $\bar \epsilon$ on the risk and compare the size of the data-sets obtained by the one-shot and incremental procedures in Section~\ref{sec:SampleSize} for the same confidence level $\beta$. Besides demonstrating the data saving, we also show that if we compute the optimal solutions using the data-set prescribed by the iterative procedure and the larger one of the one-shot procedure, then, the risk of the former estimated \textit{a posteriori} through Proposition~\ref{prop:a-posteriori-robustness} is closer to $\bar \epsilon$ than that of the latter. Moreover, the risk of the corresponding scenario solutions is shown to be slightly closer to $\bar\epsilon$ when the incremental procedure is used. 

All tests are performed on a laptop with an Intel\textsuperscript{\textregistered} Core\texttrademark~Ultra 7 155H, 1400Mhz, with 16 cores and 22 logical processors, and 32.0 GB of RAM. 

\subsection{Robustness assessment}\label{subsec:UCP-robustness}

Our experimental analysis was run by solving problem \eqref{eq:UC} 12 times, using half of the available weekday power demand data of 3 consecutive months centered at month $k=1,\dots,12$, from January ($k=1$) to December ($k=12$), to better meet the identically distributed assumption required by the scenario theory. The remaining half data within the same months were used for Monte-Carlo evaluations of the actual risk. 

Note that we shall introduce the superscript $k$ in all the quantities involved to recall what the central month is, but we shall use the same subscript $N$ (without any $k$) to denote that quantities depend on the number of considered scenarios, even if this number slightly changes depending on the considered 3-month period. 

In Figure~\ref{fig:UC:complexity4} we represent the estimated value of the complexity $s_{N}^{\star,k}$, together with the estimate $\varsigma_{N}^k$ of the complexity of problem \eqref{eq:SP:c} using the same data, for each of the 12 optimal solutions $x_N^{\star,k}$, with $k=1,\dots,12$. 
\begin{figure}
    \centering
    \includegraphics[width=0.99\linewidth]{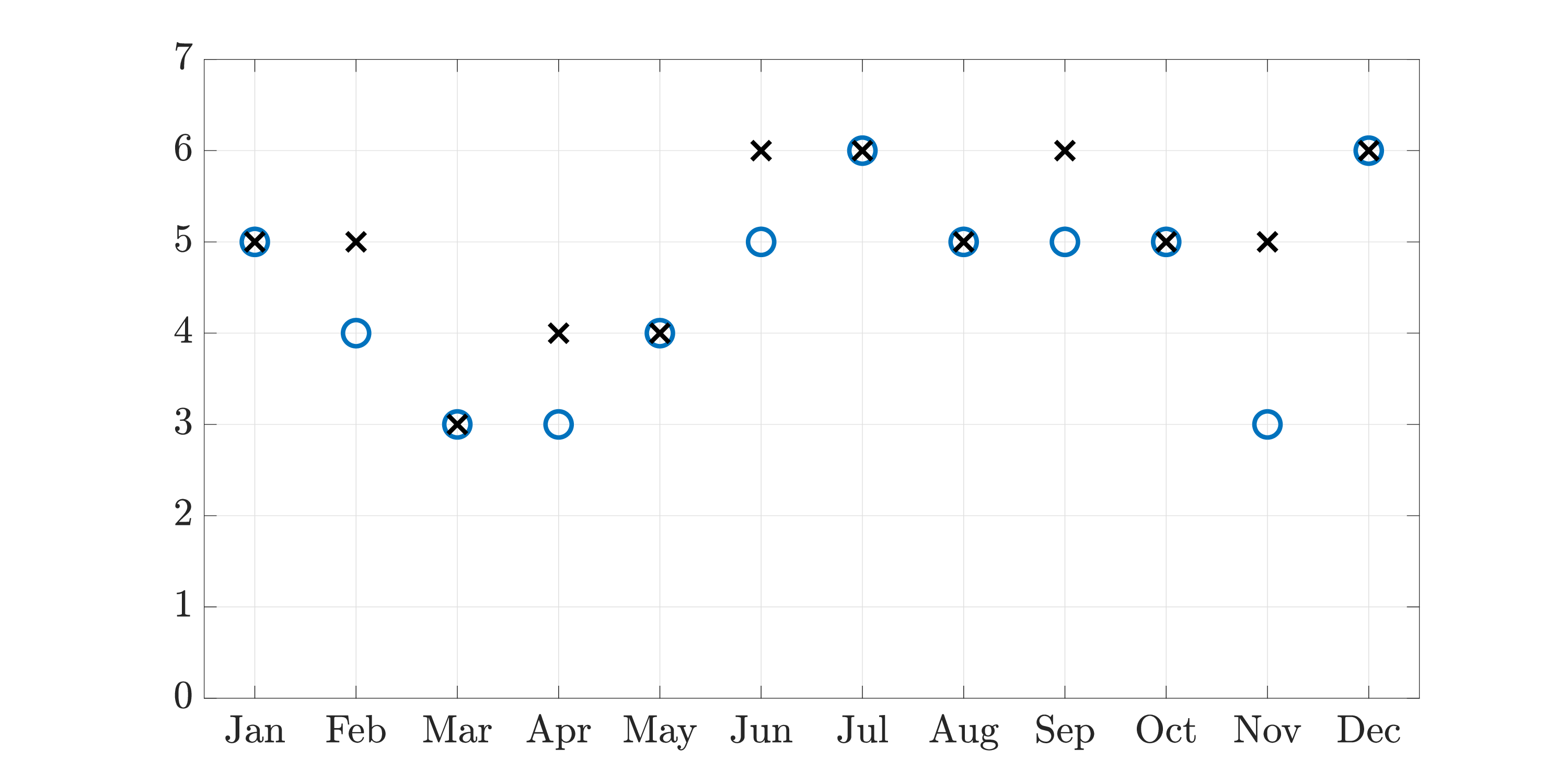}
    \caption{Comparison of $s_N^{\star,k}$ (circles) with $\varsigma_N^k$ (crosses), for $k = 1,\dots,12$, with the central month indicated on the horizontal axis.}
    \label{fig:UC:complexity4}
\end{figure}
As expected, $s_N^{\star,k}\le \varsigma_N^k$, $k=1,\dots,12$, although the difference is minor. As shown in Figure~\ref{fig:solution} with reference to $k=6$ (central month June), when $s_N^{\star,k}< \varsigma_N^k$,  there are some time slots during the one-day horizon  where the power provided by the committed GUs is larger than the maximal power demand over the scenarios, which means that the operating constraints are dominating over the scenario constraints that are not active (cf. discussion after Theorem~\ref{th:comp} and Figure~\ref{fig:gVxi}).

\begin{figure}
    \centering
    \includegraphics[width=0.99\linewidth]{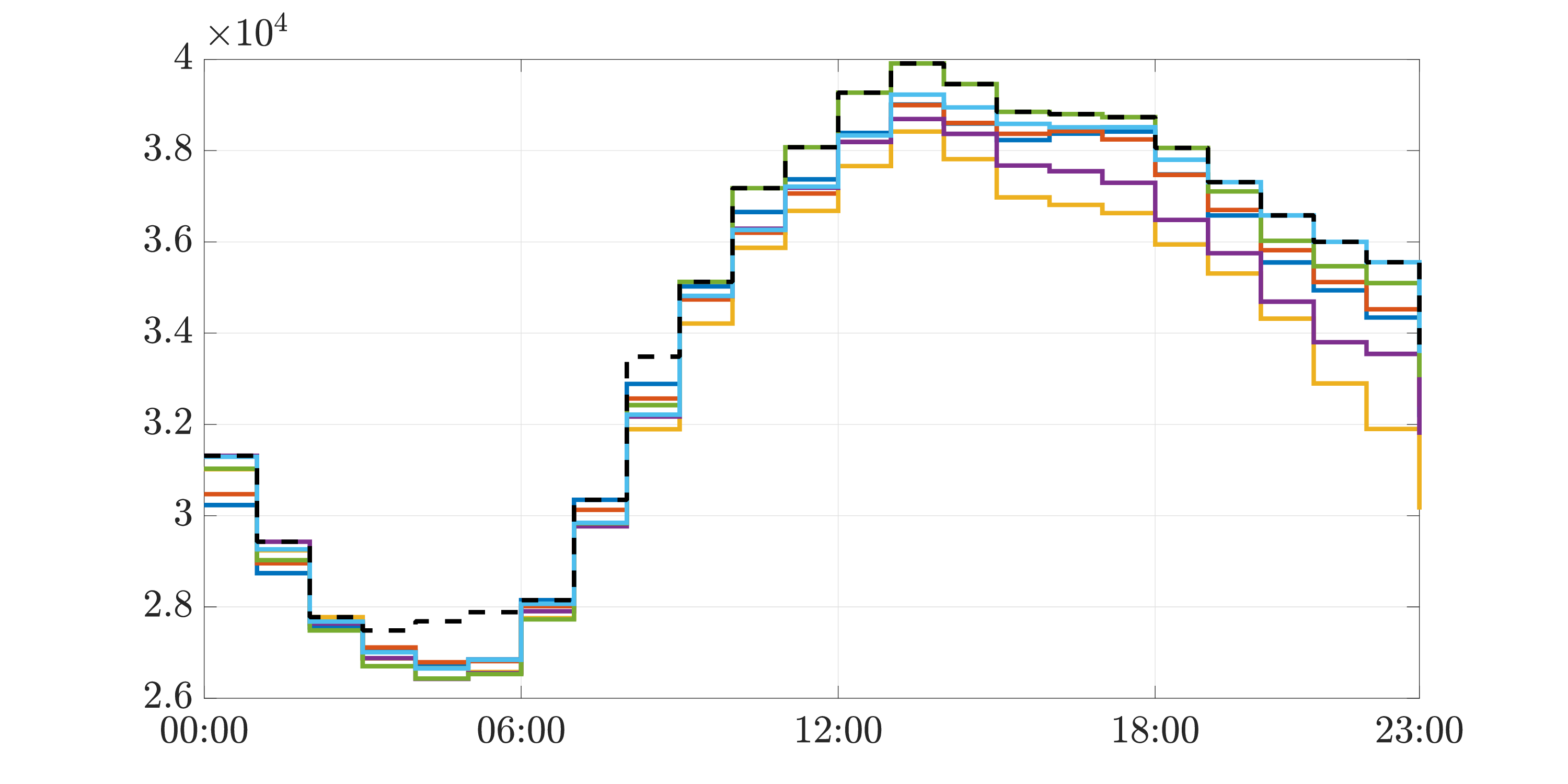}
    \caption{Comparison of $\sum_{j = 1}^{n_\mathrm{p}} P_{j,t}^{\star}$ (black dashed line) with $P_{i,t}^\mathrm{d}$, $i \in \mathcal I_N$ (solid colored lines), for $k=6$.}
    \label{fig:solution}
\end{figure}

Figure~\ref{fig:UC:bound4} reports the empirical risk, and the values of the \textit{a priori} bound on the risk in \eqref{eq:nc:aPri} and the \textit{a posteriori} bound computed according to \eqref{eq:a-posteriori-bound} and \eqref{eq:eps-1-beta}, respectively using $\varsigma_N^k$ and $s_N^{\star,k}$. All bounds were assessed for the same confidence parameter $\beta=10^{-6}$.  Not surprisingly, the \textit{a priori} bound is the most conservative and the \textit{a posteriori} bound relying on the complexity $s_N^{\star,k}$ is better than the one relying on $\varsigma_N^k$, but only for those $k$ such that $s_N^{\star,k}< \varsigma_N^k$ (cf. Figure~\ref{fig:UC:complexity4}). All bounds are above the actual risk as predicted by the theory.

\begin{figure}
    \centering
    \includegraphics[width=0.99\linewidth]{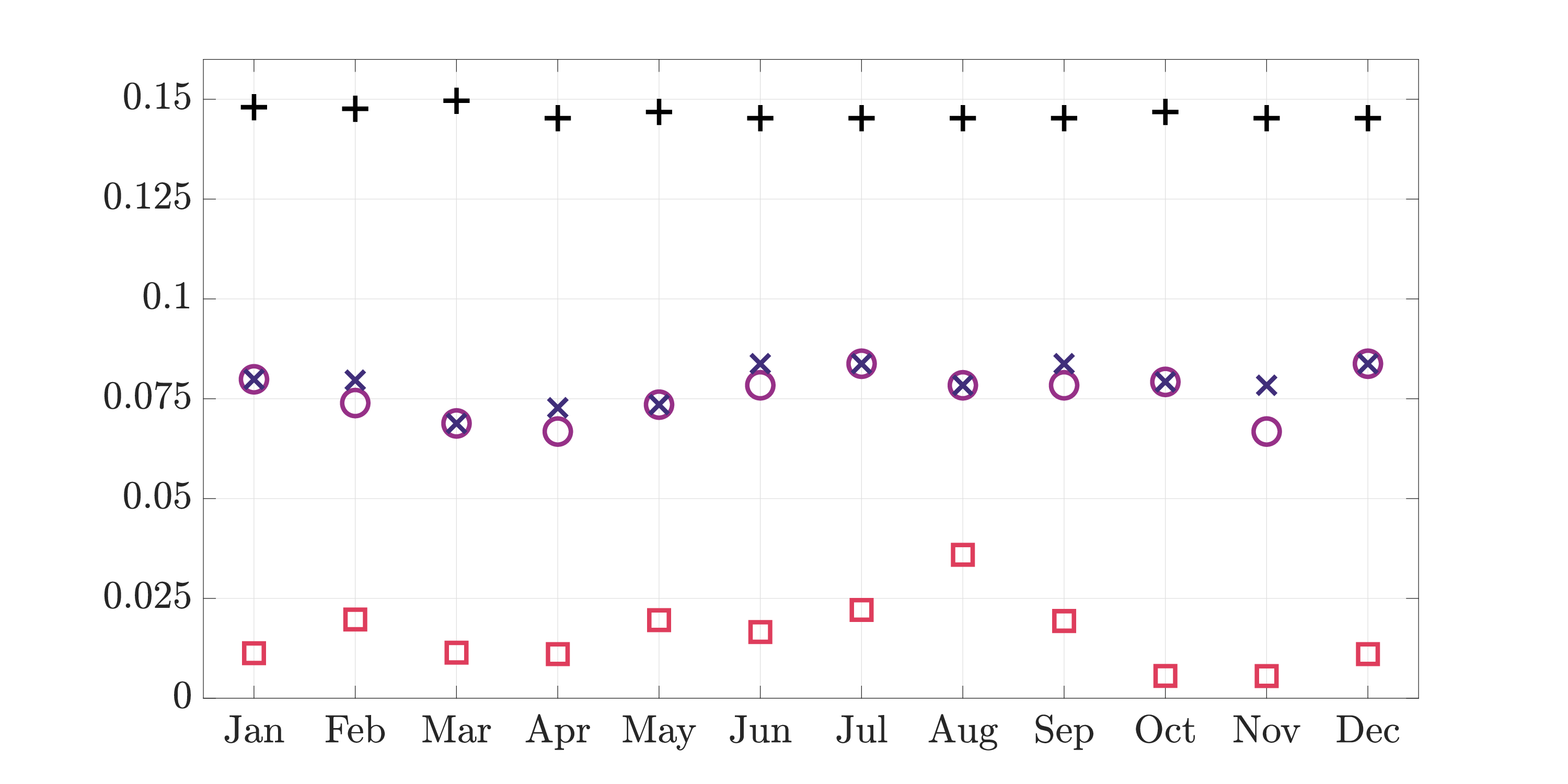}
    \caption{Comparison of the empirical risk of $x_N^{\star,k}$ (squares) with its \textit{a posteriori} bounds obtained via \eqref{eq:eps-1-beta}  (circles) and \eqref{eq:a-posteriori-bound} (crosses), and the \textit{a priori} bound in \eqref{eq:nc:aPri} (pluses), for $k = 1,\dots,12$, with the central month indicated on the horizontal axis.
    }
    \label{fig:UC:bound4}
\end{figure}

The average time for determining the \textit{a posteriori} bounds in Figure~\ref{fig:UC:bound4} was $4\,\text{h}$ and $33\,\text{min}$ for bound  \eqref{eq:eps-1-beta} and $0.5\, \text{ms}$ for bound \eqref{eq:a-posteriori-bound}. The motivation for this significant difference is that while computing $\varsigma_N^k$ is quite straightforward, computing $s_N^{\star,k}$ is time consuming, even with the adopted greedy procedure in \cite{campi2018general}, since one has to repeatedly solve the mixed integer unit commitment problem \eqref{eq:UC}. 

\subsection{Experiment sizing}

We also computed the size of the data-set necessary to achieve a prescribed bound  $\bar\epsilon=0.1$ on the risk, for ${\beta=10^{-6}}$. 
In this case, we compared the value of $N = 533 $ obtained with the one-shot method in Theorem~\ref{th:1shot} with the value $N_{j^\star}$ obtained by running the incremental Algorithm~\ref{alg:Nj} on scenarios extracted at random from half of the power demand data of the 5 months centered in July, amounting to a total of $601$ daily power demand profiles. The other half of the data within the same months is used for risk estimation.
Given that the outcome of the incremental approach depends on the considered scenarios, we performed 100 runs and built the histogram of Figure~\ref{fig:UC:iter4:hist}. In all runs, we found a value of $N_{j^\star}$ smaller than $N=533$ of the one-shot method, with a worst-case of $376$.

In every run, besides the optimal scenario solution obtained using the $N_{j^\star}$ scenarios of the data-set constructed with Algorithm~\ref{alg:Nj}, we also determined the optimal solution of the scenario program associated with that data-set enlarged with additional $533-N_{j^\star}$ profiles so as to reach the size $N=533$ prescribed by the one-shot method.  
Figure~\ref{fig:UC:iter4} shows the results of the 100 runs by comparing the \textit{a posteriori} bound \eqref{eq:a-posteriori-bound} on the risk for the two optimal solutions (panel a) and their corresponding Monte-Carlo estimated actual risk (panel b). Note that the \textit{a posteriori} bound is closer to the desired $\bar \epsilon$ when  $N_{j^\star}$ scenarios determined by the incremental method are used. The risk is nevertheless slightly overestimated since the gap with the actual risk remains a bit large for both bounds; just slightly smaller for the optimal solution of the incremental method, which, in any case, has the advantage of using a lower number of scenarios.

\begin{figure}
    \centering
    \includegraphics[width=0.99\linewidth]{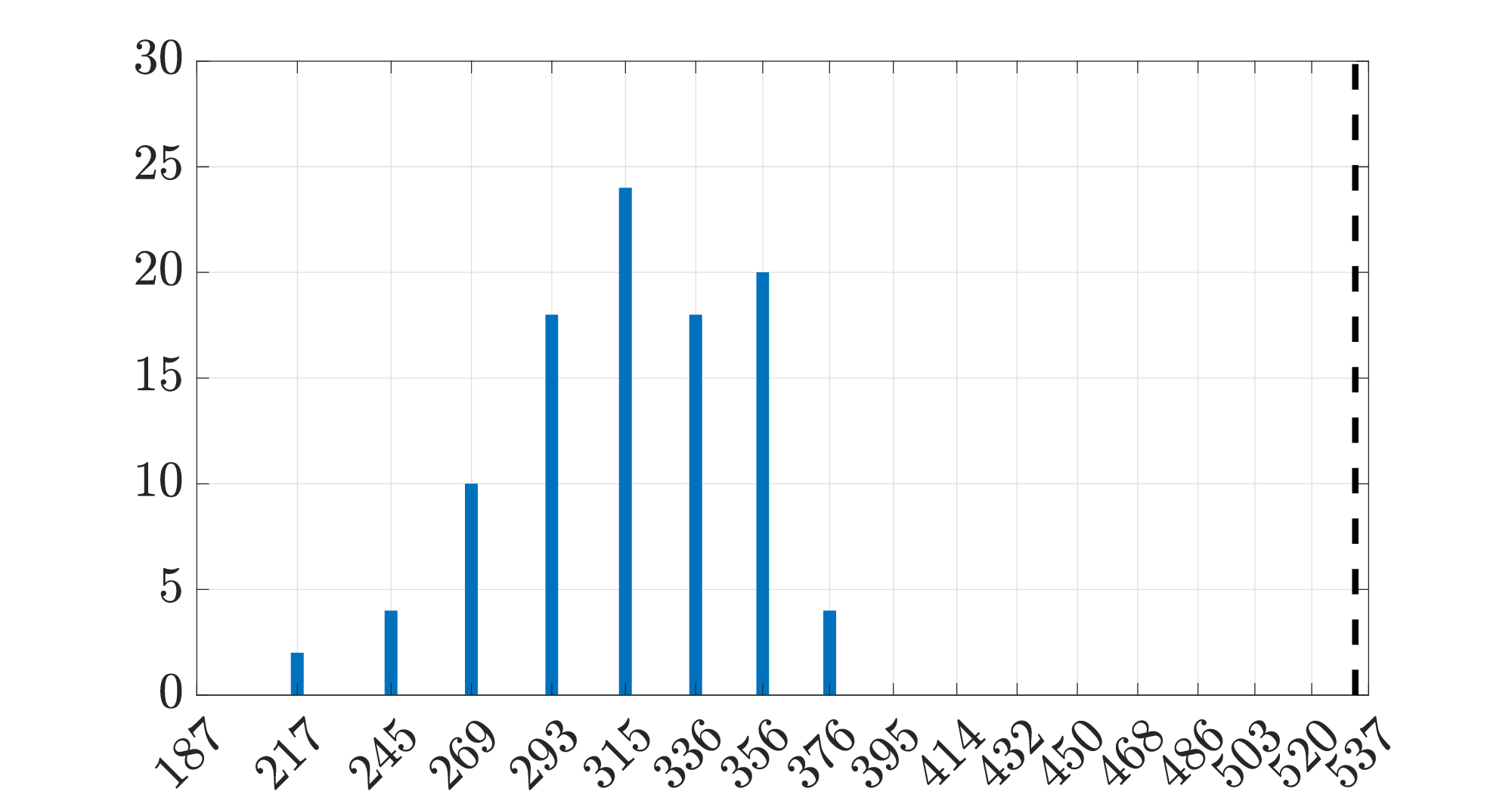}
    \caption{Number of occurrences of $N_{j^\star}$  obtained over $100$ runs of the incremental approach in Algorithm~\ref{alg:Nj} compared to the value $N$ obtained with the one-shot method in \eqref{eq:oneshot} (vertical dashed line). 
    }
    \label{fig:UC:iter4:hist}
\end{figure}

\begin{figure}[t]
    \centering
    \begin{subfigure}[b]{0.8\linewidth}
        \includegraphics[width=\textwidth]{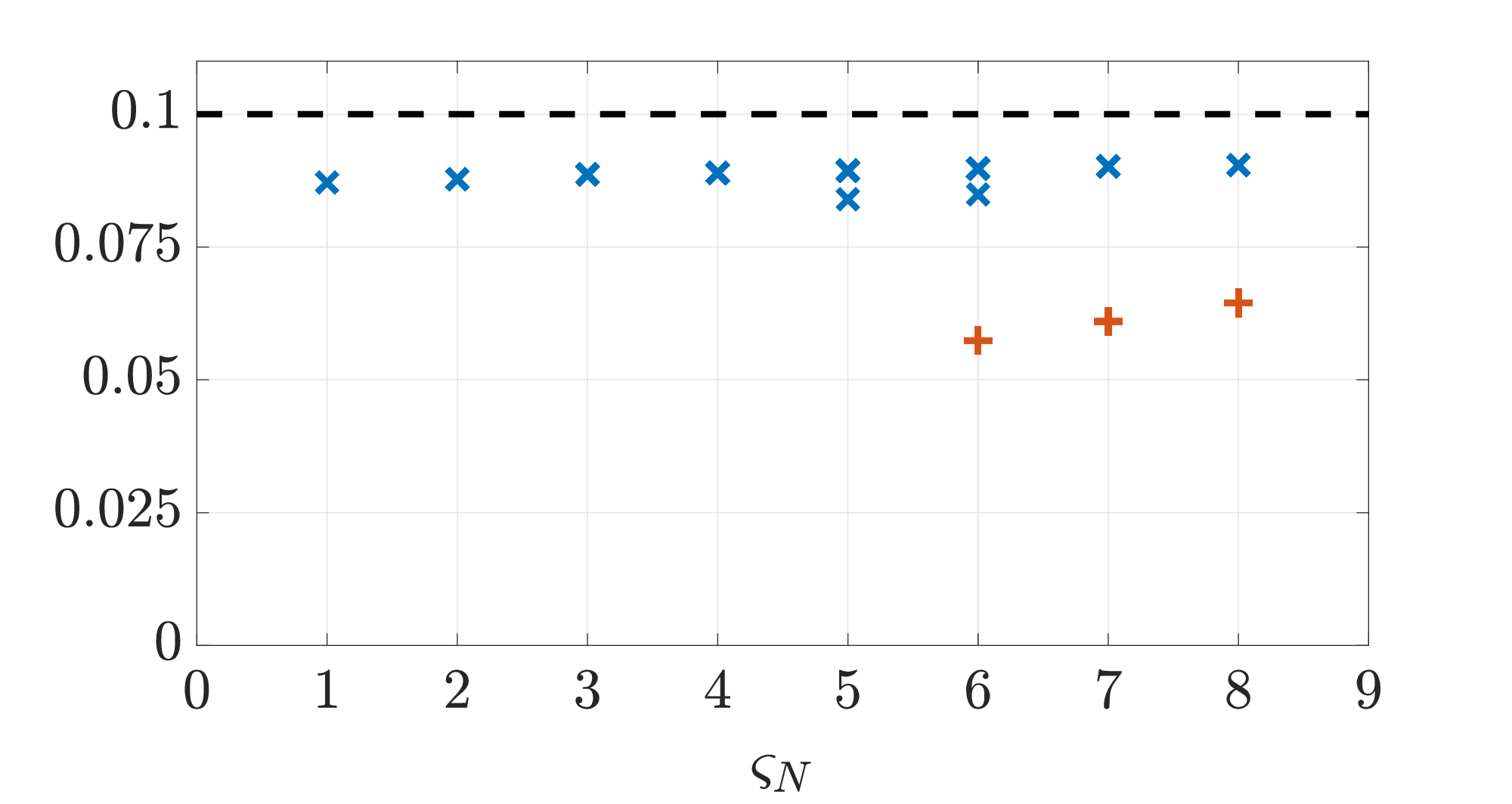}
        \caption{A posteriori bound on the risk.}
        \label{fig:UC:iter4:1S}
    \end{subfigure}\\
    \begin{subfigure}[b]{0.8\linewidth}
        \includegraphics[width=\textwidth]{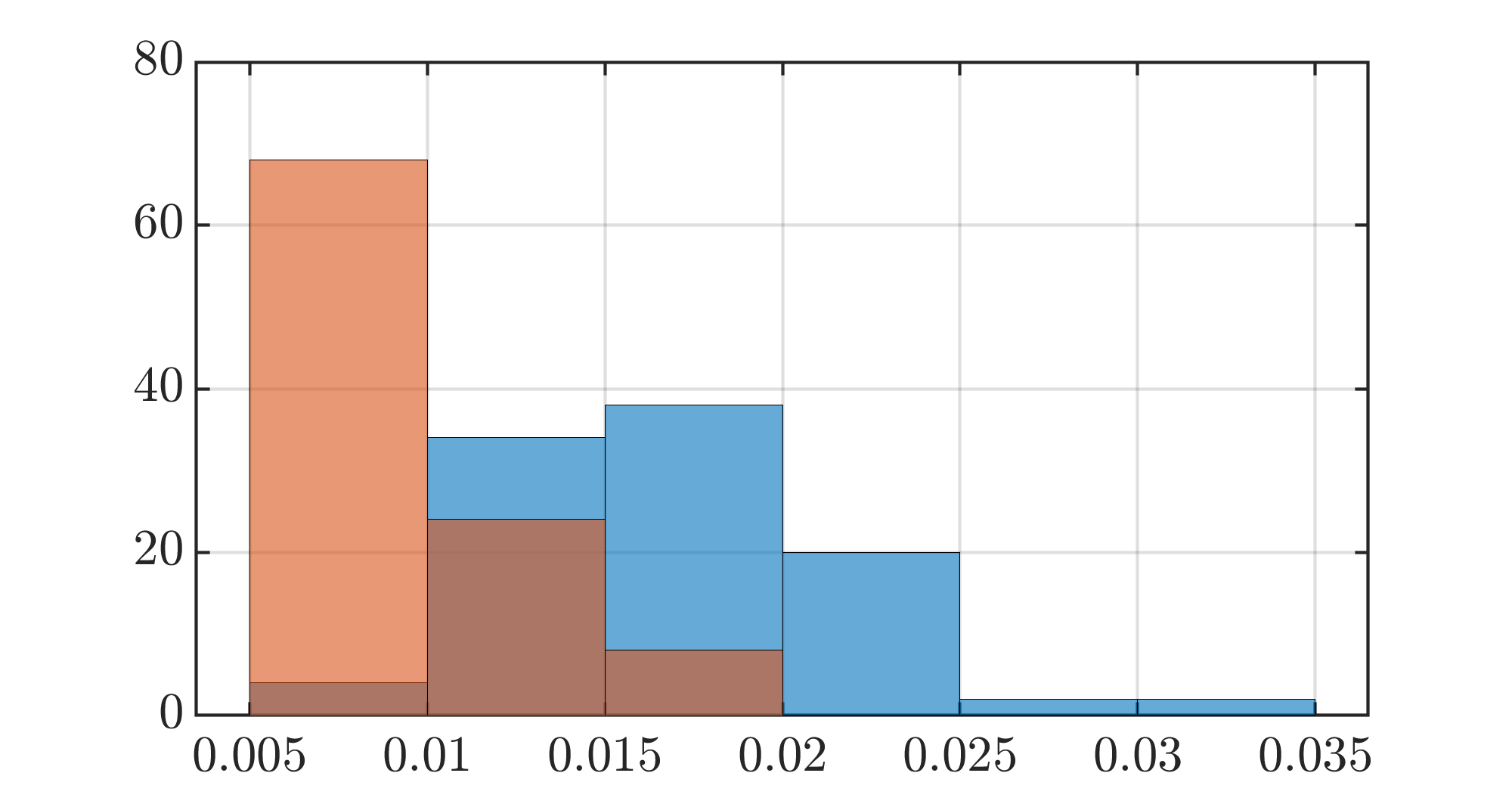}
        \caption{Empirical risk.}
        \label{fig:UC:iter4:iter}
    \end{subfigure} 
    \caption{Risk assessment over $100$ runs. Figure (a): A posteriori bound \eqref{eq:a-posteriori-bound} on the risk as a function of the complexity $\varsigma_N$ obtained when using the one-shot method (orange pluses) and the incremental method (blue crosses) for experiment sizing, with the desired $\bar \epsilon =0.1$ plotted with a black dashed line. Figure (b): Histograms of the empirical risk of the scenario solutions obtained with the data-set of the one-shot method (in orange) and the incremental method (in blue).  }
    \label{fig:UC:iter4}
\end{figure}

\section{Conclusion}\label{sec:Concl}
In this paper, we consider data-based optimization problems where uncertainty enters the constraints in a additively  way, so that it  can be separated from the decision variables.
We provide results on probabilistic \textit{a priori} and \textit{a posteriori} robustness certificates and experimental data sizing for achieving a certain probabilistic robustness level. Unlike standard applications of the scenario theory, our guarantees hold for a possibly sub-optimal solution and can be computed with negligible effort before solving the optimization problem, which is convenient in a non-convex context where computing an optimal solution is computationally demanding. Admittedly, this comes at the price of more conservative certificates than those potentially achievable by the scenario approach, which, however, apply to the optimal solution only and require computing the complexity, involving to repeatedly solve a non-convex program. We formulated the unit commitment problem in electric power system as a mixed integer quadratic program and analyzed our results comparatively to those of standard scenario theory using a low dimensional instance of the problem so as to be able to compute the optimal solution. The analysis performed on real power demand data shows that we experience a significant gain in computational effort at the price of a limited increase in conservativeness.

\bibliographystyle{ieeetr}
\bibliography{ref}

\end{document}